\newtheorem{theorem}{Theorem}[section]
\newtheorem{lemma}[theorem]{Lemma}
\newtheorem{corollary}[theorem]{Corollary}
\newtheorem{proposition}[theorem]{Proposition}
\theoremstyle{definition}
\newtheorem{definition}[theorem]{Definition}
\theoremstyle{remark}
\newtheorem{remark}[theorem]{Remark}
\numberwithin{equation}{section}
\newcommand\hH{\widehat{H}}
\newcommand\hb{\widehat{b}}
\newcommand\hT{\widehat{T}}
\newcommand\bH{\overline{H}}
\newcommand\bb{\overline{b}}
\newcommand\tM{\widetilde{M}}
\begin{document}

\title[Relative $L$-theory and signature]{On the relative $L$-theory and the relative signature of PL manifolds with boundary}
	\author[B. Hou]{Bingzhe Hou}
	\address{School of Mathematics, Jilin University, 130012, Changchun, P. R. China}
    \email{houbz@jlu.edu.cn}
	\author[H. Liu]{Hongzhi Liu}
	\address{School of Mathematics, Shanghai University of Finance and Economics, 200433, Shanghai, P. R. China.}
    \email{liu.hongzhi@sufe.edu.cn}
	\date{}
	\subjclass{19J25, 19K99.}
	\keywords{Relative surgery exact sequence; $K$-theory of $C^*$-algebras; signature operator on manifold with boundary; relative higher $\rho$ invariant.}
\begin{abstract}
In this paper, we give a new description of the group structure of the relative structure group of PL manifolds with boundary, and obtain a surgery exact sequence in the category of groups. Then we focus on the relative $L$-group of PL manifolds with boundary, and  map it to the $K$-theory additively.
% \texttt{testmath.tex} file from the \textsf{amsmath} distribution.  
\end{abstract}

\maketitle

\section{Introduction}\label{sec of rel introduction}

In this paper, we give a new description of the group structure of the relative structure group of PL manifolds with boundary, and obtain a surgery exact sequence in the category of groups. Then we focus on the relative $L$-group of PL manifolds with boundary, and  map it to the $K$-theory additively.
	
The surgery exact sequence and the relative surgery exact sequence are powerful tools to study the classification of PL manifolds and PL manifolds with boundary (Wall \cite{Wall70}, Quinn \cite{Quinn1971andthesutgeryobsturction}, Ranicki \cite{Ranicki92book}). Originally, they were defined as  exact sequences of groups and sets.  In  \cite{WXY18}, Weinberger, Xie and Yu showed that the surgery exact sequence of PL manifolds is actually an exact sequence consists of groups and homomorphisms by introducing a new definition of the structure group of PL manifolds based on ideas of Wall and ideas from the
controlled topology, which leads to a transparent group structure of the topological structure group given by disjoint union.   Our first main result, is to generalize Weinberger, Xie and Yu's result to the relative surgery exact sequence.  We give a new definition of the relative structure group of PL manifolds with boundary, whose group structure is as transparent as the disjoint union, and put the relative $L$-group of  PL manifolds with boundary into an exact sequence of groups.
More precisely, let $(X,\partial X)$ be an $n$-dimensional PL manifold with boundary, set $\Gamma=\pi_1 X, G=\pi_1(\partial X)$. Then the relative $L$-group of $(X, \partial X)$ is denoted as $L_n(\pi_1X, \pi_1(\partial X)),$ the relative normal group is denoted as $N_{n}(X, \partial X; \omega),$ and the relative structure group we define in the paper is denoted as  $S_{n}(X, \partial X; \omega).$   Then we have

\vspace{0.3cm}
{\bf Main Theorem 1.}(Theorem \ref{Les})
		We have the following long exact sequence of commutative groups
	\begin{align*}
		\cdots &\stackrel{}{\longrightarrow} S_{n}(X, \partial X; \omega)  \stackrel{\widetilde{\partial}_*}{\longrightarrow} N_{n}(X, \partial X; \omega) \stackrel{i_*}{\longrightarrow} L_{n}(\pi_1X, \pi_1(\partial X); \omega) \\
		&\stackrel{j_*}{\longrightarrow} S_{n-1}(X, \partial X; \omega)  \stackrel{\widetilde{\partial}_*}{\longrightarrow} N_{n-1}(X, \partial X; \omega) \stackrel{}{\longrightarrow} \cdots.
		\end{align*}
\vspace{0.2cm}

It is well known that there is a group homomorphism from the $L$-group to the $K$-theory of the Roe algebra, a geometric $C^*$-algebra. Then it is natural to ask whether we can define an additive map from the relative $L$-group to the $K$-theory of a certain geometric $C^*$-algebra. Let $(X, \partial X)$ be as above, and $\Gamma=\pi_1 X,\ G=\pi_1(\partial X).$ Let $\widetilde X$ (resp. $\widetilde{\partial X}$) be the universal covering of $X$ (resp. $\partial X$.) In \cite{CWY15},  Chang, Weinberger and Yu defined the relative Roe algebra, denoted as $C^*(\widetilde X, \widetilde{\partial X})^{\Gamma, G},$ and the relative index of the Dirac type operator on a manifold with boundary, which lives in the $K$-theory of   $C^*(\widetilde X, \widetilde{\partial X})^{\Gamma, G}.$ The relative index defined by Chang, Weinberger and Yu, can be viewed as the explanation of the bordism invariance of the index of the Dirac type operator. In this paper, inspired by Higson and Roe's constructions in  \cite{HR051,HR052,HR053}, we define the relative index of the signature operator on manifolds with boundary by the simplicial approach, which is denoted as $\text{relInd}(X, \partial X)$ for the PL manifold with boundary $(X, \partial X).$ This allows us to  consider the PL manifolds with boundary, apparently on which there is no signature operator, and define the additive map from the relative $L$-group to the $K$-theory of the relative Roe algebra.

\vspace{0.3cm}
{\bf Main Theorem 2.}(Theorem \ref{add map from L to K})
The map
\[
\text{relInd}: L_n(\pi_1(X),\pi_1(\partial X))\to K_n(C^*(\widetilde X, \widetilde{\partial X})^{\Gamma,G})
\]
is a well defined group homomorphism.
\vspace{0.2cm}

We mention that the relative index of the signature operator on a manifold with boundary has been used to prove the relative Novikov conjecture (\cite{DG171}, \cite{DG172}, \cite{Ttheis19}), but we are not aware of whether the relative index of  signature operator considered in those papers   are equal to the one we define here.

	This paper is organized as follows. In Section \ref{sec Surgery}, we generalize Weinberger, Xie, and Yu's results in \cite{WXY18}, to give a new description of the relative topological structure group of a topological manifold with boundary, and put the relative $L$-group into an exact sequence consists of groups. In Section \ref{sec K theory preparation}, we recall the definitions of the relative Roe algebra. In Section \ref{sec relative index}, we define the  relative signature of PL manifolds with boundary, and show that it induces an additive map from the relative $L$-group to the $K$-theory of the relative Roe algebra.
	
	The authors would like to thank Shmuel Weinberger, Zhizhang Xie and Guoliang Yu for their helpful guidance and advice. The second author is partially supported by NSFC 11901374.
	
\section{Surgery}\label{sec Surgery}

In this section, we give a new description of the relative surgery group and the relative surgery exact sequence, which could be viewed as a generalization of Weinberger, Xie and Yu's definition of structure groups of PL manifolds to the relative case.

We first recall some definitions related to the infinitesimally controlled homotopy equivalence.

Let $X$ be a closed topological manifold. Fix a metric on $X$ that agrees with the topology of $X$.

\begin{definition}
Let $Y$ be a topological space. We call a continuous map $\phi: Y\to X$ a control map of $Y$.
\end{definition}

\begin{definition}
Let $Y$ and $Z$ be two compact Hausdorff spaces equipped with control maps $\psi: Y\to X$ and $\phi: Z\to X$. A continuous map $f: Y\to Z$ is said to be a controlled homotopy equivalence over $X$, if
 \begin{enumerate}
 \item $\phi=\psi f$;
 \item there exists a continuous map $g: Z\to Y$ such that $\psi=\phi g$;
 \item $fg \sim_h {\rm I}_{Y}$ and $gf\sim_h {\rm I}_Z$.
\end{enumerate}
 \end{definition}

Now let us recall the definition of infinitesimally controlled homotopy equivalence (cf. \cite[Definition 3.3]{WXY18}).
\begin{definition}[Infinitesimally controlled homotopy equivalence]
Let $Y$ and $Z$ be two compact Hausdorff spaces equipped with control maps $\psi: Y\to X$ and $\phi: Z\to X$. A continuous map $f: Y\to Z$ is said to be an infinitesimally controlled homotopy equivalence over $X$, if there exist proper continuous maps
\begin{eqnarray*}
\Phi: Z\times [1, \infty)\to X\times [1, \infty) & \text{ and } & \Psi:  Y\times [1, \infty)\to X\times [1, \infty),\\
F: Y\times [1, \infty)\to Z\times [1, \infty)& \text{ and } & Z\times [1, \infty)\to Y\times [1, \infty)
\end{eqnarray*}
satisfying the following conditions:
\begin{enumerate}
\item $\Phi F=\Psi$;
\item $F|_{Y\times \{1\}}=f, \Phi|_{Z\times \{1\}}=\phi, \Psi|_{Y\times \{1\}}=\psi $;
\item there is a proper continuous homotopy $\{H_s\}_{0\leq s\leq 1}$ between
\[
H_0= FG \text{ and } H_1= \text{id}: Z\times [1, \infty)\to Z\times [1, \infty )
\]
such that the diameter of the set $\Phi(H(z,t))= \{\Phi(H_s(z, t)) | 0\leq s\leq 1 \}$
goes uniformly (i.e. independent of $z\in Z$ ) to zero, as $t\to \infty$;
\item there is a proper continuous homotopy $\{R_s\}_{0\leq s\leq 1}$ between
\[
R_0= GF \text{ and } H_1= \text{id}: Y\times [1, \infty)\to Y\times [1, \infty )
\]
such that the diameter of the set $\Psi(R(y,t))= \{\Psi(R_s(y, t)) | 0\leq s\leq 1 \}$
goes uniformly (i.e. independent of $y\in Y$ ) to zero, as $t\to \infty$;
\end{enumerate}
\end{definition}

    Let $X$ be a compact manifold with boundary $\partial X$ whose dimension is greater than $5$.
	The  definition of relative $L$-group follows from Wall's work in \cite{Wall70}.
	\begin{definition}[Objects for the definition of $L_n(\pi_1X, \pi_1(\partial X); \omega)$]\label{def ele of rel L}
		An object
		$$
		\theta=\{M, \partial_{\pm} M, \phi, N, \partial_{\pm} N, \psi, f\}
		$$
		in $L_n(\pi_1X, \pi_1(\partial X); \omega)$ consists of the following data

		\begin{figure}[htb]
			\centering
			\includegraphics[width=5in]{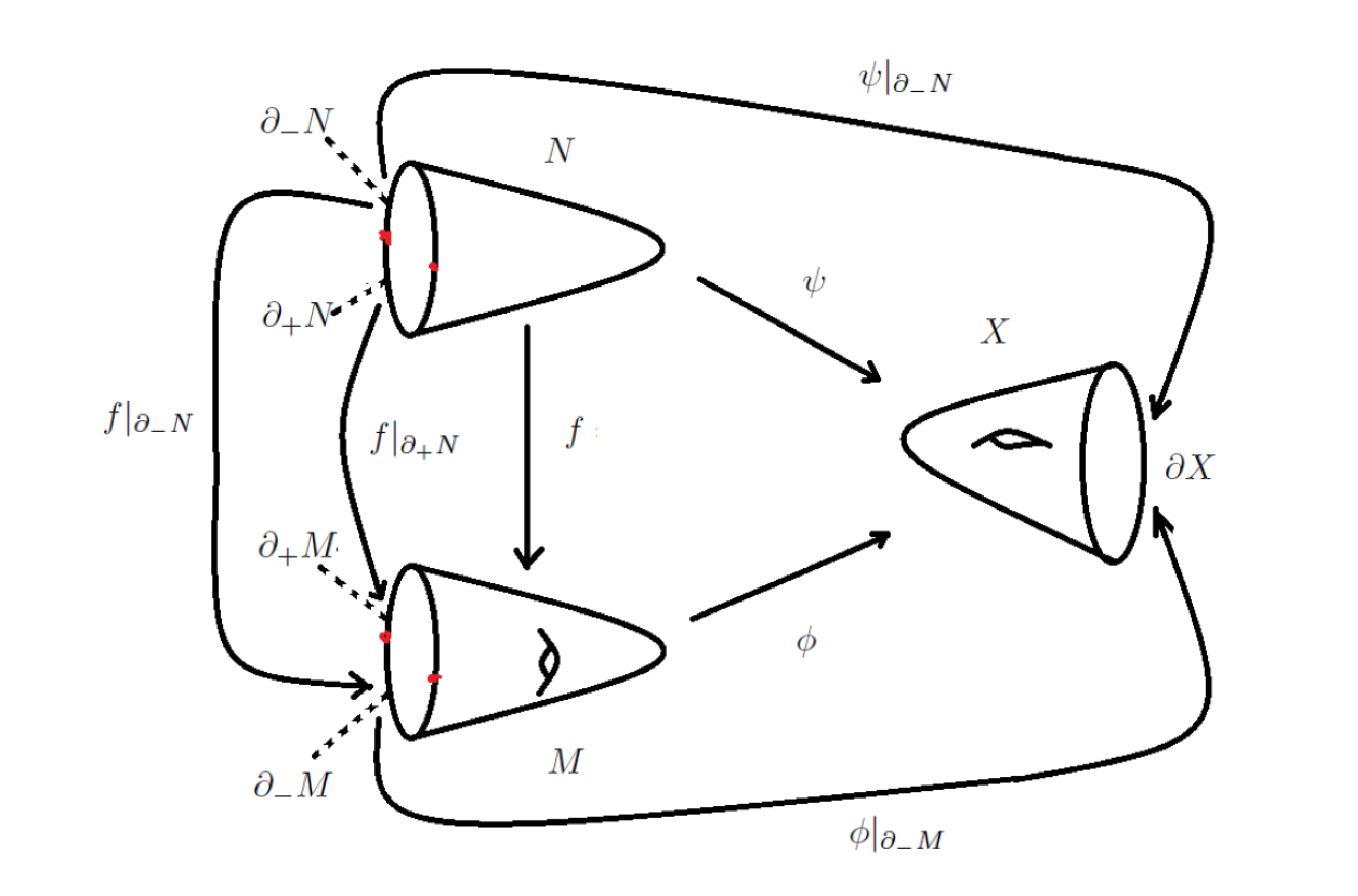}
			\caption{\footnotesize An object $\theta=\{M, \partial_{\pm} M, \phi, N, \partial_{\pm} N, \psi, f\}$ in $L_n(\pi_1X, \pi_1(\partial X); \omega)$.}
		\end{figure}

		\begin{enumerate}
			\item two manifold 2-ads $(M, \partial_{\pm}M)$ and $(N, \partial_{\pm}N)$ with ${\rm dim}M={\rm dim}N=n$, with $\partial M=\partial_+ M\cup \partial_- M$ (resp. $\partial N=\partial_+ N\cup \partial_- N$) the boundary of $M$ (resp. $\partial N$). In particular, $\partial_+ M \cap \partial_-M= \partial\partial_{\pm} M $ and $\partial_+ N \cap \partial_-N= \partial\partial_{\pm} N$;
			\item continuous maps $\phi: (M, \partial_{-} M)\rightarrow (X, \partial X)$ and $\psi: (N, \partial_{-} N)\rightarrow (X, \partial X)$ so that $\phi^*(\omega)$ and $\psi^*(\omega)$ describe the orientation characters of  $M$ and $N$;
			\item a degree one normal map of manifold 2-ads $f:(N, \partial_{\pm} N) \rightarrow (M, \partial_{\pm} M)$ such that  $\phi\circ f=\psi$;
			\item the restriction $f|_{\partial_+ N}: (\partial_+ N, \partial \partial_+ N) \to (\partial_+ M, \partial \partial_+ M)$ is a homotopy equivalence of pairs over $(X, \partial X)$;
			\item the restriction $f|_{\partial_- N}: \partial_- N\to \partial_- M$ is a degree one normal map over $\partial X$.
		\end{enumerate}
	\end{definition}

	\begin{definition}[Equivalence relation for the definition of $L_n(\pi_1X, \pi_1(\partial X); \omega)$]\label{equi rel L}
		Let
		$$
		\theta=\{M, \partial_{\pm} M, \phi, N, \partial_{\pm} N, \psi, f\}
		$$
		be an object in $L_n(\pi_1X, \pi_1(\partial X); \omega)$. We write $\theta\sim 0$ if the following conditions are satisfied.
		
 \begin{figure}[htb]
			\centering
			\includegraphics[width=5.0in]{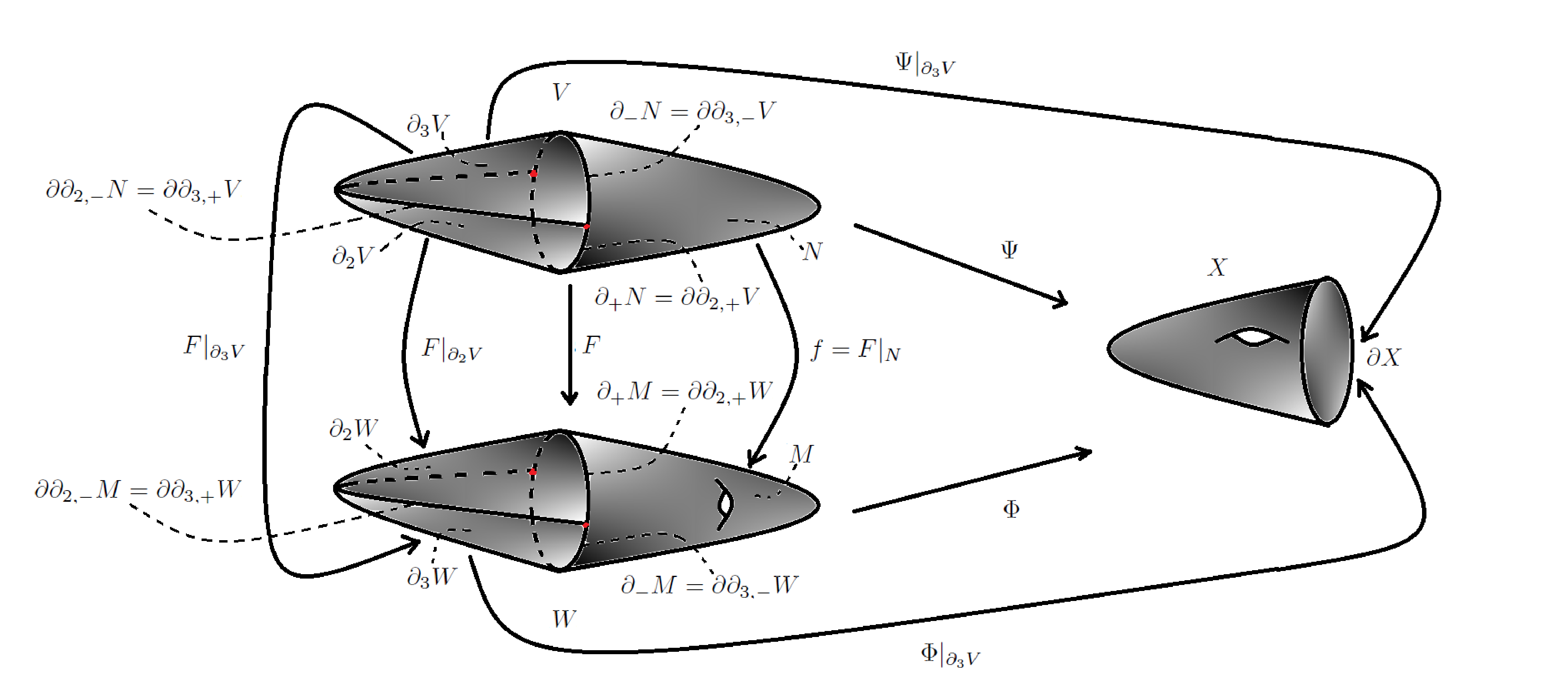}
			\caption{\footnotesize Equivalence relation $\theta\sim 0$ for the definition of $L_n(\pi_1X, \pi_1(\partial X); \omega)$.}
		\end{figure}
		
\begin{enumerate}
			\item There exists a manifold 3-ads $(W, \partial W)$ of dimension $(n+1)$ with a continuous map $\Phi: (W, \partial_3 W)\rightarrow (X, \partial X)$ so that $\Phi^*(\omega)$ describes the orientation character of $W$, where $\partial W=M(=\partial_1 W)\cup\partial_2 W\cup \partial_3 W$. Moreover, we have decompositions $\partial M=\partial_+ M\cup \partial_- M$, $\partial(\partial_2 W)=\partial\partial_{2, +} W\cup \partial\partial_{2, -} W$, and $\partial(\partial_3 W)=\partial\partial_{3, +} W\cup \partial\partial_{3, -} W$ such that
			$$
			\partial_+M=\partial\partial_{2, +} W, \ \partial_-M=\partial\partial_{3, -} W \ \text{and} \ \partial\partial_{2,-}M=\partial\partial_{3, +} W.
			$$
			Furthermore, we have
			$$
			\partial_+M\cap \partial_-M=\partial\partial_{2, +} W\cap \partial\partial_{2, -} W=\partial\partial_{3, +} W\cap \partial\partial_{3, -} W.
			$$
			\item Similarly, we have a manifold 3-ads $(V, \partial V)$ of dimension $(n+1)$ with a continuous map $\Psi: (V, \partial_3 V)\rightarrow (X, \partial X)$ so that $\Psi^*(\omega)$ describes the orientation character of $V$, where $\partial V=N(=\partial_1 V)\cup\partial_2 V\cup \partial_3 V$ satisfying similar conditions as $W$.
			
			\item There is a degree one normal map of manifold 3-ads $F: (V, \partial V) \rightarrow (W, \partial W)$ such that  $\Phi\circ F=\Psi$. Moreover, $F$ restricts to $f$ on $N\subseteq \partial V$.
			\item The restriction $F|_{\partial_2 V}: \partial_2 V\to \partial_2 W$ is a homotopy equivalence over $X$.
		\end{enumerate}
	\end{definition}

	We denote by $L_n(\pi_1X, \pi_1(\partial X); \omega)$ the set of equivalence classes from Definition \ref{equi rel L}. Note that $L_n(\pi_1X, \pi_1(\partial X); \omega)$  is an abelian group with the sum operation being disjoint union. We call $L_n(\pi_1X, \pi_1(\partial X); \omega)$  the relative $L$-group.

In the following, we give a controlled version of $L_n(\pi_1X, \pi_1(\partial X); \omega)$.
	
	\begin{definition}[Objects for the definition of $N_n(X, \partial X; \omega)$]\label{def ele of rel N}
		An object
		$$
		\theta=\{M, \partial_{\pm} M, \phi, N, \partial_{\pm} N, \psi, f\}
		$$
		in $N_n(X, \partial X; \omega)$ consists of the following data
		\begin{enumerate}
			\item two manifold 2-ads $(M, \partial_{\pm}M)$ and $(N, \partial_{\pm}N)$ with ${\rm dim}M={\rm dim}N=n$, with $\partial M=\partial_+ M\cup \partial_- M$ (resp. $\partial N=\partial_+ N\cup \partial_- N$) the boundary of $M$ (resp. $\partial N$). In particular, $\partial_+ M \cap \partial_-M= \partial\partial_{\pm} M $ and $\partial_+ N \cap \partial_-N= \partial\partial_{\pm} N$;
			\item continuous maps $\phi: (M, \partial_{-} M)\rightarrow (X, \partial X)$ and $\psi: (N, \partial_{-} N)\rightarrow (X, \partial X)$ so that $\phi^*(\omega)$ and $\psi^*(\omega)$ describe the orientation characters of  $M$ and $N$;
			\item a degree one normal map of manifold 2-ads $f:(N, \partial_{\pm} N) \rightarrow (M, \partial_{\pm} M)$ such that  $\phi\circ f=\psi$;
			\item the restriction $f|_{\partial_+ N}: \partial_+ N\to \partial_+ M$ is an infinitesimally controlled homotopy equivalence over $X$;
			\item the restriction $f|_{\partial_- N}: \partial_- N\to \partial_- M$ is a degree one normal map over $X$.
		\end{enumerate}
		
	\end{definition}
	
	\begin{definition}[Equivalence relation for the definition of $N_n(X, \partial X; \omega)$]\label{equi rel N}
		Let
		$$
		\theta=\{M, \partial_{\pm} M, \phi, N, \partial_{\pm} N, \psi, f\}
		$$
		be an object in $N_n(X, \partial X; \omega)$. We write $\theta\sim 0$ if the following conditions are satisfied.
		\begin{enumerate}
			\item There exists a manifold 3-ads $(W, \partial W)$ of dimension $(n+1)$ with a continuous map $\Phi: (W, \partial_3 W)\rightarrow (X, \partial X)$ so that $\Phi^*(\omega)$ describes the orientation character of $W$, where $\partial W=M(=\partial_1 W)\cup\partial_2 W\cup \partial_3 W$. Moreover, we have decompositions $\partial M=\partial_+ M\cup \partial_- M$, $\partial(\partial_2 W)=\partial\partial_{2, +} W\cup \partial\partial_{2, -} W$, and $\partial(\partial_3 W)=\partial\partial_{3, +} W\cup \partial\partial_{3, -} W$ such that
			$$
			\partial_+M=\partial\partial_{2, +} W, \ \partial_-M=\partial\partial_{3, -} W \ and \ \partial\partial_{2,-}M=\partial\partial_{3, +} W.
			$$
			Furthermore, we have
			$$
			\partial_+M\cap \partial_-M=\partial\partial_{2, +} W\cap \partial\partial_{2, -} W=\partial\partial_{3, +} W\cap \partial\partial_{3, -} W.
			$$
			\item Similarly, we have a manifold 3-ads $(V, \partial V)$ of dimension $(n+1)$ with a continuous map $\Psi: (V, \partial_3 V)\rightarrow (X, \partial X)$ so that $\Psi^*(\omega)$ describes the orientation character of $V$, where $\partial V=N(=\partial_1 V)\cup\partial_2 W\cup \partial_3 W$ satisfying similar conditions as $W$.
			
			\item There is a degree one normal map of manifold 3-ads $F: (V, \partial V) \rightarrow (W, \partial W)$ such that  $\Phi\circ F=\Psi$. Moreover, $F$ restricts to $f$ on $N\subseteq \partial V$.
			\item The restriction $F|_{\partial_2 V}: \partial_2 V\to \partial_2 W$ is an infinitesimally controlled homotopy equivalence over $X$.
		\end{enumerate}
		
	\end{definition}
	
	We denote by $N_n(X, \partial X; \omega)$ the set of equivalence classes from Definition \ref{equi rel N}, which is actually an abelian group with the sum operation being disjoint union. % We call this group as the relative normal group.
	
	Now we introduce the new description of relative topological surgery group.

	\begin{definition}[Objects for the definition of $S_n(X, \partial X; \omega)$]\label{def ele of rel S}
		An object
		$$
		\theta=\{M, \partial_{\pm} M, \phi, N, \partial_{\pm} N, \psi, f)\}
		$$
		in $S_n(X, \partial X; \omega)$ consists of the following data
		\begin{enumerate}
			\item two manifold 2-ads $(M, \partial_{\pm}M)$ and $(N, \partial_{\pm}N)$ with ${\rm dim}M={\rm dim}N=n$, with $\partial M=\partial_+ M\cup \partial_- M$ (resp. $\partial N=\partial_+ N\cup \partial_- N$) the boundary of $M$ (resp. $\partial N$). In particular, $\partial_+ M \cap \partial_-M= \partial\partial_{\pm} M $ and $\partial_+ N \cap \partial_-N= \partial\partial_{\pm} N$;
			\item continuous maps $\phi: (M, \partial_{-} M)\rightarrow (X, \partial X)$ and $\psi: (N, \partial_{-} N)\rightarrow (X, \partial X)$ so that $\phi^*(\omega)$ and $\psi^*(\omega)$ describe the orientation characters of  $M$ and $N$;
			\item a homotopy equivalence of manifold 2-ads $f:(N, \partial_{\pm} N) \rightarrow (M, \partial_{\pm} M)$ such that  $\phi\circ f=\psi$;
			\item the restriction $f|_{\partial_+ N}: \partial_+ N\to \partial_+ M$ is an infinitesimally controlled homotopy equivalence over $X$;
			\item the restriction $f|_{\partial_- N}: \partial_- N\to \partial_- M$ is a homotopy equivalence over $X$.
		\end{enumerate}	
	\end{definition}
	
	\begin{definition}[Equivalence relation for the definition of $S_n(X, \partial X; \omega)$]\label{equi rel S}
		Let
		$$
		\theta=\{M, \partial_{\pm} M, \phi, N, \partial_{\pm} N, \psi, f\}
		$$
		be an object in $S_n(X, \partial X; \omega)$. We write $\theta\sim 0$ if the following conditions are satisfied.
		\begin{enumerate}
			\item There exists a manifold 3-ads $(W, \partial W)$ of dimension $(n+1)$ with a continuous map $\Phi: (W, \partial_3 W)\rightarrow (X, \partial X)$ so that $\Phi^*(\omega)$ describes the orientation character of $W$, where $\partial W=M(=\partial_1 W)\cup\partial_2 W\cup \partial_3 W$. Moreover, we have decompositions $\partial M=\partial_+ M\cup \partial_- M$, $\partial(\partial_2 W)=\partial\partial_{2, +} W\cup \partial\partial_{2, -} W$, and $\partial(\partial_3 W)=\partial\partial_{3, +} W\cup \partial\partial_{3, -} W$ such that
			$$
			\partial_+M=\partial\partial_{2, +} W, \ \partial_-M=\partial\partial_{3, -} W \ \text{and} \ \partial\partial_{2,-}M=\partial\partial_{3, +} W.
			$$
			Furthermore, we have
			$$
			\partial_+M\cap \partial_-M=\partial\partial_{2, +} W\cap \partial\partial_{2, -} W=\partial\partial_{3, +} W\cap \partial\partial_{3, -} W.
			$$
			\item Similarly, we have a manifold 3-ads $(V, \partial V)$ of dimension $(n+1)$ with a continuous map $\Psi: (V, \partial_3 V)\rightarrow (X, \partial X)$ so that $\Psi^*(\omega)$ describes the orientation character of $V$, where $\partial V=N(=\partial_1 V)\cup\partial_2 V\cup \partial_3 V$ satisfying similar conditions as $W$.
			
			\item There is a homotopy equivalence of manifold 3-ads $F: (V, \partial V) \rightarrow (W, \partial W)$ such that  $\Phi\circ F=\Psi$. Moreover, $F$ restricts to $f$ on $N\subseteq \partial V$.
			\item The restriction $F|_{\partial_2 V}: \partial_2 V\to \partial_2 W$ is an infinitesimally controlled homotopy equivalence over $X$.
		\end{enumerate}
	\end{definition}
	
	We denote by $S_n(X, \partial X; \omega)$ the set of equivalence classes from Definition \ref{equi rel S}. It is not difficult to see that $S_n(X, \partial X; \omega)$  is an abelian group with the sum operation being disjoint union. %The abelian group $S_n(X, \partial X; \omega)$

	We need the following auxiliary group to form the new description of the relative surgery exact sequence.

	\begin{definition}[Objects for the definition of $L_n(\pi_1X, \pi_1(\partial X), X; \omega)$]\label{def ele of rel LX}
		An object
		$$
		\theta=\{M, \partial_{k} M, \phi, N, \partial_{k} N, \psi, f; k=1,2,3.\}
		$$
		in $L_n(\pi_1X, \pi_1(\partial X), X; \omega)$ consists of the following data
		\begin{enumerate}
			\item two manifold 3-ads $(M, \partial_{k}M; k=1,2,3.)$ and $(N, \partial_{k}N; k=1,2,3.)$ with ${\rm dim}M={\rm dim}N=n$, with $\partial M=\partial_1 M\cup \partial_2 M\cup \partial_3 M$ (resp. $\partial N=\partial_1 N\cup \partial_2 N\cup \partial_3 N$) the boundary of $M$ (resp. $\partial N$). Moreover, $\partial(\partial_i M)=\cup_{j\neq i}\partial\partial_{i,j}M$ for each $i=1,2,3$ and $\partial\partial_{i,j}M=\partial_i M\cap \partial_j M$ for any $i\neq j$;
			\item continuous maps $\phi: (M, \partial_{3} M)\rightarrow (X, \partial X)$ and $\psi: (N, \partial_{3} N)\rightarrow (X, \partial X)$ so that $\phi^*(\omega)$ and $\psi^*(\omega)$ describe the orientation characters of  $M$ and $N$;
			\item a degree one normal map of manifold 3-ads $f:(N, \partial N) \rightarrow (M, \partial M)$ such that  $\phi\circ f=\psi$;
			\item the restriction $f|_{\partial_1 N}: \partial_1 N\to \partial_1 M$ is a degree one normal map over $X$ ;
			\item the restriction $f|_{\partial_2 N}: \partial_2 N\to \partial_2 M$ is a homotopy equivalence over $X$ and it restricts to an infinitesimally controlled homotopy equivalence $f|_{\partial\partial_{1,2} N}: \partial\partial_{1,2} N\to \partial\partial_{1,2} M$ over $X$;
			\item the restriction $f|_{\partial_3 N}: \partial_3 N\to \partial_3 M$ is a degree one normal map over $X$.
		\end{enumerate}
		
	\end{definition}
	
	\begin{definition}[Equivalence relation for the definition of $L_n(\pi_1X, \pi_1(\partial X), X; \omega)$]\label{equi rel LX}
		Let
		$$
		\theta=\{M, \partial_{k} M, \phi, N, \partial_{k} N, \psi, f; k=1,2,3.\}
		$$
		be an object in $L_n(\pi_1X, \pi_1(\partial X), X; \omega)$. We write $\theta\sim 0$ if the following conditions are satisfied.
		\begin{enumerate}
			\item There exists a manifold 4-ads $(W, \partial W)$ of dimension $(n+1)$ with a continuous map $\Phi: (W, \partial_4 W)\rightarrow (X, \partial X)$ so that $\Phi^*(\omega)$ describes the orientation character of $W$, where $\partial W=M(=\partial_1 W)\cup\partial_2 W\cup \partial_3 W\cup \partial_4 W$. Moreover, we have decompositions $\partial M=\partial_1 M\cup \partial_2 M\cup \partial_3 M$, $\partial(\partial_2 W)=\partial\partial_{2, 1} W\cup \partial\partial_{2, 3} W\cup \partial\partial_{2, 4} W$, $\partial(\partial_3 W)=\partial\partial_{3, 1} W\cup \partial\partial_{3, 2} W\cup \partial\partial_{3, 4} W$,  and $\partial(\partial_4 W)=\partial\partial_{4, 1} W\cup \partial\partial_{4, 2} W\cup \partial\partial_{4, 3} W$ such that
			$$
			\partial_1M=\partial\partial_{1, 2} W, \ \partial_2M=\partial\partial_{1, 3} W, \ \ and \ \ \partial_3M=\partial\partial_{1, 4} W
			$$
			and
			$$
			\partial\partial_{i, j}W=\partial\partial_{j, i} W=\partial_i W\cap \partial_j W \ \ for \ any \ i, j=1,2,3,4.
			$$
			Furthermore, we have
			\begin{align*}
			\partial_1M\cap \partial_2M&=\partial\partial_{1, 2} W\cap \partial\partial_{1, 3} W=\partial\partial_{2, 1} W\cap \partial\partial_{2, 3} W=\partial\partial_{3, 1} W\cap \partial\partial_{3, 2} W \\
			&=\partial_{1} W\cap \partial_{2} W \cap \partial_{3} W=\partial\partial\partial_{1, 2, 3} W,
			\end{align*}
			\begin{align*}
			\partial_1M\cap \partial_3M&=\partial\partial_{1, 2} W\cap \partial\partial_{1, 4} W=\partial\partial_{2, 1} W\cap \partial\partial_{2, 4} W=\partial\partial_{4, 1} W\cap \partial\partial_{4, 2} W \\
			&=\partial_{1} W\cap \partial_{2} W \cap \partial_{4} W=\partial\partial\partial_{1, 2, 4} W
			\end{align*}
			\begin{align*}
			\partial_2M\cap \partial_3M&=\partial\partial_{1, 3} W\cap \partial\partial_{1, 4} W=\partial\partial_{3, 1} W\cap \partial\partial_{3, 4} W=\partial\partial_{4, 1} W\cap \partial\partial_{4, 3} W \\
			&=\partial_{1} W\cap \partial_{3} W \cap \partial_{4} W=\partial\partial\partial_{1, 3, 4} W,
			\end{align*}
			and
			\begin{align*}
			\partial_1M\cap \partial_2M\cap \partial_3M&=\partial\partial_{1, 2} W\cap \partial\partial_{1, 3} W \cap \partial\partial_{1, 4} W \\
			&=\partial\partial_{2, 1} W\cap \partial\partial_{2, 3} W \cap \partial\partial_{2, 4} W \\
			&=\partial\partial_{3, 1} W\cap \partial\partial_{3, 2} W \cap \partial\partial_{3, 4} W \\
			&=\partial\partial_{4, 1} W\cap \partial\partial_{4, 2} W \cap \partial\partial_{4, 3} W \\
			&=\partial_{1} W\cap \partial_{2} W \cap \partial_{3} W \cap \partial_{4} W \\
			&=\partial\partial\partial\partial_{1,2, 3,4} W.
			\end{align*}
			\item Similarly, we have a manifold 4-ads $(V, \partial V)$ of dimension $(n+1)$ with a continuous map $\Psi: (V, \partial_4 V)\rightarrow (X, \partial X)$ so that $\Psi^*(\omega)$ describes the orientation character of $V$, where $\partial V=N(=\partial_1 V)\cup\partial_2 V\cup \partial_3 V\cup\partial_4 V$ satisfying similar conditions as $W$.
			
			\item There is a degree one normal map of manifold 4-ads $F: (V, \partial V) \rightarrow (W, \partial W)$ such that  $\Phi\circ F=\Psi$. Moreover, $F$ restricts to $f$ on $N\subseteq \partial V$.
			\item The restriction $F|_{\partial_k V}: \partial_k V\to \partial_k W$ is a degree one normal map over $X$ for $k=1,2,4$.
			\item The restriction $F|_{\partial_3 V}: \partial_3 V\to \partial_3 W$ is a homotopy equivalence over $X$ and it restricts to an infinitesimally controlled homotopy equivalence $F|_{\partial\partial_{2,3} V}: \partial\partial_{2,3} V\to \partial\partial_{2,3} W$ over $X$.
		\end{enumerate}
		
	\end{definition}
	
	Let $L_n(\pi_1X, \pi_1(\partial X), X; \omega)$ be the set of equivalence classes from Definition \ref{equi rel LX}.  By definition, one can see that $L_n(\pi_1X, \pi_1(\partial X), X; \omega)$ is actually a group with the sum operation being disjoint union.

	Now let us form our description of the relative topological surgery exact sequence.

	Note that there is a natural group homomorphism
	$$
	i_{*}: N_n(X, \partial X; \omega)\rightarrow L_n(\pi_1X, \pi_1(\partial X); \omega)
	$$
	by forgetting control.

	Define
	$$
	j_{*}: L_n(\pi_1X, \pi_1(\partial X); \omega)\rightarrow L_n(\pi_1X, \pi_1(\partial X), X; \omega)
	$$
	by
	$$
	j_{*}(\theta)=\{M, (\emptyset, \partial_{+} M, \partial_{-} M), \phi, N, (\emptyset, \partial_{+} N, \partial_{-} N), \psi, f\}
	$$
    for $\theta=\{M, \partial_{\pm} M, \phi, N, \partial_{\pm} N, \psi, f\}$, and define
	$$
	\partial_{*}: L_{n+1}(\pi_1X, \pi_1(\partial X), X; \omega)\rightarrow N_n(X, \partial X; \omega)
	$$
	by
	$$
	\partial_{*}(\theta)=\partial_{1}(\theta)=\theta_1=\{\partial_1M, (\partial\partial_{1,2} M, \partial\partial_{1,3} M), \phi, \partial_1N, (\partial\partial_{1,2} M, \partial\partial_{1,3} M), \psi, f\}
	$$
for any $\theta=\{M, \partial_{k} M, \phi, N, \partial_{k} N, \psi, f; k=1,2,3.\}$.
	Furthermore, we call $\theta_1$ the $\partial_1$-boundary of $\theta$ and we may define $\partial_k$-boundary similarly.

	\begin{theorem}\label{Les1}
		We have the following long exact sequence		
		\begin{align*}
		\cdots &\stackrel{}{\longrightarrow} L_{n+1}(\pi_1X, \pi_1(\partial X),X; \omega)  \stackrel{\partial_*}{\longrightarrow} N_{n}(X, \partial X; \omega) \stackrel{i_*}{\longrightarrow} L_{n}(\pi_1X, \pi_1(\partial X); \omega) \\
		&\stackrel{j_*}{\longrightarrow} L_{n}(\pi_1X, \pi_1(\partial X),X; \omega)  \stackrel{\partial_*}{\longrightarrow} N_{n-1}(X, \partial X; \omega) \stackrel{}{\longrightarrow} \cdots
		\end{align*}		
	\end{theorem}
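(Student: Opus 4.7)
The plan is to verify exactness at each of the three positions $N_n(X,\partial X;\omega)$, $L_n(\pi_1X, \pi_1(\partial X); \omega)$, and $L_n(\pi_1X, \pi_1(\partial X), X; \omega)$, modeled on Wall's derivation of the relative surgery exact sequence in Chapter 9 of \cite{Wall70}, while tracking the infinitesimal-control data through every cobordism argument.

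For the three ``image contained in kernel'' inclusions the witnesses should be essentially immediate. Any object $\theta$ of $L_{n+1}(\pi_1X, \pi_1(\partial X), X; \omega)$ itself serves as a null-cobordism for $i_{*}\partial_{*}(\theta)$ in $L_n(\pi_1X, \pi_1(\partial X); \omega)$: its $\partial_3$-face carries a homotopy equivalence and its $\partial_2, \partial_4$-faces carry normal maps, matching the equivalence relation for $L_n$ once the codimension-two control on $\partial\partial_{2,3}$ is forgotten. For $\theta \in N_n$, the cylinder $N \times I$ with an appropriately labeled 4-ad structure (the controlled face $\partial_+ N$ swept out to form the homotopy-equivalence side $\partial_3$) yields a null-cobordism of $j_{*} i_{*}(\theta)$ in $L_n(\pi_1X, \pi_1(\partial X), X; \omega)$. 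Finally, $\partial_{*} j_{*}(\theta)$ is tautologically zero because $j_{*}$ sets the first face to $\emptyset$ by definition.

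For the three ``kernel contained in image'' inclusions, the strategy is to reinterpret a null-cobordism as an object of the preceding group. If $\theta \in N_n$ satisfies $i_{*}(\theta) \sim 0$, I would relabel the witnessing 3-ad $W$ as a 4-ad, keeping the original $\partial_1 W$-side as $\partial_1$ and promoting the controlled locus of $\theta$ to the codimension-two stratum $\partial\partial_{1,2}$; this should yield an element of $L_{n+1}(\pi_1X, \pi_1(\partial X), X; \omega)$ whose $\partial_{*}$-boundary recovers $\theta$. Analogously, a null-cobordism of $j_{*}(\theta)$ restricts to an object of $N_n$ mapping to $\theta$ under $i_{*}$, while a null-cobordism of $\partial_{*}(\theta)$ in $N_{n-1}$ can be glued to $\theta$ along $\partial_1(\theta)$ to produce an object of $L_n(\pi_1X, \pi_1(\partial X); \omega)$ whose $j_{*}$-image is equivalent to $\theta$.

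The main technical obstacle I anticipate is the stratum bookkeeping required in these gluings: one must verify that the codimension-two and codimension-three corners match correctly after relabeling or concatenation, and, crucially, that the infinitesimal-control condition on the distinguished stratum is preserved when two objects are glued along a common face. These verifications should follow from standard transversality and collar-neighbourhood arguments for topological manifolds with boundary (valid since we are in dimension $\geq 5$), combined with the stability of the infinitesimal-control condition under concatenation with controlled cylinders, exactly as in the additivity argument of Weinberger, Xie and Yu \cite{WXY18}.
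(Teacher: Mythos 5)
Your proposal is correct and follows essentially the same route as the paper: the cylinder $\xi\times I$ for $j_*i_*=0$, relabeling the null-cobordism of $i_*(\theta)$ as a 4-ad (equivalently, reading the 3-ad witness as an object of $L_{n+1}(\pi_1X,\pi_1(\partial X),X;\omega)$ with $\partial_1$-boundary $\theta$), extracting the $\partial_3$-face of a null-cobordism of $j_*(\theta)$ as the preimage in $N_n$, and gluing the null-cobordism of $\partial_*(\theta)$ along $\partial_1(\theta)$ to land in the image of $j_*$. The stratum-bookkeeping issues you flag are likewise left implicit in the paper's argument.
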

	\begin{proof}
		(I) {\bf Exactness at $N_{n}(\pi_1X, \pi_1(\partial X); \omega)$.} Let $\theta\in N_{n}(\pi_1X, \pi_1(\partial X); \omega)$.
		Then $i_*(\theta)=0$ if and only if there exists an element
		$$
		\eta=\{W, \partial_{k} W, \Phi, V, \partial_{k} V, \Psi, F; k=1,2,3.\}
		$$
		satisfying the conditions in \ref{equi rel L}. Note that $\eta$ is an element in $L_{n+1}(\pi_1X, \pi_1(\partial X),X; \omega)$ and is mapped to $\theta$ under $\partial_*$. This proves the exactness at $N_{n}(\pi_1X, \pi_1(\partial X); \omega)$.

		(II) {\bf Exactness at $L_{n}(\pi_1X, \pi_1(\partial X); \omega)$.} Let
		$$
		\xi=\{M, \partial_{\pm} M, \phi, N, \partial_{\pm} N, \psi, f\}\in N_{n}(\pi_1X, \pi_1(\partial X); \omega).
		$$
		Then $j_*i_*(\xi)=0$ since $\xi\times I$ is a cobordism of $\xi$ to the empty set where $I$ is the unit interval. More precisely, $\xi\times I$ consists of the following data.
		
		(i) $W=M\times I$ with continuous map
		$$
		\Phi=\phi\circ p_1: (W, \partial_4W)\stackrel{p_1}{\rightarrow} (M, \partial_{-}M) \stackrel{\phi}{\rightarrow} (X, \partial X),
		$$
		where $p_1: W\rightarrow M$ is the natural projection, $\partial W=\partial_1 W(=M\times\{0\})\cup\partial_2 W\cup \partial_3 W\cup \partial_4 W$ with $\partial_2 W=M\times\{1\}$, $\partial_3 W=\partial_{+}M\times I$ and $\partial_4 W=\partial_{-}M\times I$.
		
		(ii) There is a similar picture for $(V, \partial V)$ with $\partial V=\partial_1 V(=N \times\{0\})\cup\partial_2 V\cup \partial_3 V\cup \partial_4 V$, where $\partial_2 V=N\times\{1\}$, $\partial_3 V=\partial_{+}N\times I$ and $\partial_4 V=\partial_{-}N\times I$.

		(iii) A degree one normal map of manifold 4-ads, $F=f\times Id : (V, \partial V)\rightarrow (W, \partial W)$. Obviously, $\Phi\circ F=\Psi$ and $F$ restricts to $f$ on $N\subseteq \partial V$.
		
		(iv) $F|_{\partial_3 V}: \partial_3 V=\partial_{+}N\times I\rightarrow \partial_3 W=\partial_{+}M\times I$ is a homotopy equivalence. This is because $f: \partial_{+}N\rightarrow \partial_{+}M$ is an infinitesimally controlled homotopoy equivalence.
		
		(v) Moreover, $F|_{\partial\partial_{2,3} V}: \partial\partial_{2,3} V=\partial_{+}N\to \partial\partial_{2,3} W=\partial_{+}M$ is  an infinitesimally controlled homotopy equivalence over $X$.

		Conversely, suppose an element
		$$
		\theta=\{M, \partial_{\pm} M, \phi, N, \partial_{\pm} N, \psi, f\}\in L_{n}(\pi_1X, \pi_1(\partial X); \omega)
		$$
		is mapped to zero in $L_{n}(\pi_1X, \pi_1(\partial X), X; \omega)$. Then
		$$
		j_{*}(\theta)=\{M, (\emptyset, \partial_{+} M, \partial_{-} M), \phi, N, (\emptyset, \partial_{+} N, \partial_{-} N), \psi, f\}
		$$
		is cobordant to empty set in $L_{n}(\pi_1X, \pi_1(\partial X), X; \omega)$. More precisely, we have the following data:
		
		\begin{enumerate}
			\item There exists a manifold 4-ads $(W, \partial W)$ of dimension $(n+1)$ with a continuous map $\Phi: (W, \partial_4 W)\rightarrow (X, \partial X)$ so that $\Phi^*(\omega)$ describes the orientation character of $W$, where $\partial W=M(=\partial_1 W)\cup\partial_2 W\cup \partial_3 W\cup \partial_4 W$.
			\item We have decompositions $\partial M=\partial_1 M(=\emptyset)\cup \partial_2 M(=\partial_+ M)\cup \partial_3 M(=\partial_- M)$, $\partial(\partial_2 W)=\partial\partial_{2, 1} W\cup \partial\partial_{2, 3} W\cup \partial\partial_{2, 4} W$, $\partial(\partial_3 W)=\partial\partial_{3, 1} W\cup \partial\partial_{3, 2} W\cup \partial\partial_{3, 4} W$,  and $\partial(\partial_4 W)=\partial\partial_{4, 1} W\cup \partial\partial_{4, 2} W\cup \partial\partial_{4, 3} W$ such that
			$$
			\partial_1M=\emptyset=\partial\partial_{1, 2} W, \ \partial_2M=\partial_+ M=\partial\partial_{1, 3} W, \ \text{and} \ \ \partial_3M=\partial_- M=\partial\partial_{1, 4} W.
			$$
			Moreover, we have $\partial\partial_{1, 3} W\cap\partial\partial_{2, 3} W=\emptyset$.
			\item Similarly, we have a manifold 4-ads $(V, \partial V)$ of dimension $(n+1)$ with a continuous map $\Psi: (V, \partial_4 V)\rightarrow (X, \partial X)$ so that $\Psi^*(\omega)$ describes the orientation character of $V$, where $\partial V=N(=\partial_1 V)\cup\partial_2 V\cup \partial_3 V\cup\partial_4 V$ satisfying similar conditions as $W$.
			
			\item There is a degree one normal map of manifold 4-ads $F: (V, \partial V) \rightarrow (W, \partial W)$ such that  $\Phi\circ F=\Psi$. Moreover, $F$ restricts to $f$ on $N\subseteq \partial V$.
			\item The restriction $F|_{\partial_k V}: \partial_k V\to \partial_k W$ is a degree one normal map over $X$ for $k=1,2,4$.
			\item The restriction $F|_{\partial_3 V}: \partial_3 V\to \partial_3 W$ is a homotopy equivalence over $X$ and it restricts to an infinitesimally controlled homotopy equivalence $F|_{\partial\partial_{2,3} V}: \partial\partial_{2,3} V\to \partial\partial_{2,3} W$ over $X$.
		\end{enumerate}
		Consequently, $F: (V, \partial V) \rightarrow (W, \partial W)$ provides a cobordism between $\theta$ and
		$$
		\eta=\{\partial_3W, (\partial\partial_{2,3} W, \partial\partial_{3,4} W), \Phi|_{\partial_3W}, \partial_3V, (\partial\partial_{2,3} V, \partial\partial_{3,4} V), \Psi|_{\partial_3V}, F\}.
		$$
		Note that $\eta$ is an element in $N_{n}(\pi_1X, \pi_1(\partial X); \omega)$. This prove the exactness at $L_{n}(\pi_1X, \pi_1(\partial X); \omega)$.
		
		(III) {\bf Exactness at $L_{n}(\pi_1X, \pi_1(\partial X),X; \omega)$.} It is obvious that $\partial_*j_*=0$ by definition. On the other hand, if an element
		$$
		\theta=\{M, \partial_{k} M, \phi, N, \partial_{k} N, \psi, f; k=1,2,3.\}\in L_{n}(\pi_1X, \pi_1(\partial X),X; \omega)
		$$
		such that $\partial_*(\theta)=0$, then there is a cobordism of $\partial_*(\theta)$ to the empty set, i.e.
		$$
		\eta=\{W, \partial_{k} W, \Phi, V, \partial_{k} V, \Psi, F; k=1,2,3.\}
		$$
		following from Definition \ref{equi rel N}. Consequently, Let $\theta'=\eta\cup_{\partial_*(\theta)}\theta$. Then a cobordism of $\theta'$ to $\theta$ is provided by $\theta'\times I$ with $\partial_{1} (\theta'\times I)=\theta'\times \{0\}\cup \theta\times \{1\}$, $\partial_{2} (\theta'\times I)=\eta\times \{1\}$, $\partial_{3} (\theta'\times I)=\partial_2\theta'\times I$ and $\partial_{4} (\theta'\times I)=\partial_3\theta'\times I$. Note that the $\partial_1$-boundary of $\theta'$ is empty, so $\theta'$ is the image of $j_*$ of some element in $L_{n}(\pi_1X, \pi_1(\partial X); \omega)$. This proves the exactness at $L_{n}(\pi_1X, \pi_1(\partial X),X; \omega)$.
	\end{proof}
	
	There is a natural group homomorphism
	$$
	c_{*}: S_n(\pi_1X, \pi_1(\partial X); \omega)\rightarrow L_{n+1}(\pi_1X, \pi_1(\partial X), X; \omega)
	$$
	by mapping
	$$
	\theta=\{M, \partial_{\pm} M, \phi, N, \partial_{\pm} N, \psi, f\}\mapsto\theta\times I
	$$
	where $\theta\times I$ consists of the following data:
	
	(1) a manifold 3-ad $(M\times I, \partial_k(M\times I); k=1,2,3)$ with $\partial_1(M\times I)=(M\times \{0\})\cup_{\partial_+M\times \{0\}}(\partial_+M\times I)$, $\partial_2(M\times I)=M\times \{1\}$ and $\partial_3(M\times I)=\partial_-M\times I$; in particular, $\partial\partial_{1,2}(M\times I)=\partial_+M$;
	
	(2) similarly, another manifold 3-ad $(N\times I, \partial_k(N\times I); k=1,2,3)$ with $\partial_1(N\times I)=(N\times \{0\})\cup_{\partial_+N\times \{0\}}(\partial_+N\times I)$, $\partial_2(N\times I)=N\times \{1\}$ and $\partial_3(N\times I)=\partial_-N\times I$;
	
	(3) a continuous map
	$$
	\widetilde{\phi}:=\phi\circ p_1: (M\times I, \partial_3(M\times I)) \stackrel{p_1}{\rightarrow} (M, \partial_-M) \stackrel{\phi}{\rightarrow} (X, \partial X)
	$$
	such that $(\phi\circ p_1)^{*}(\omega)$ describes the orientation character of $M\times I$, where $p_1$ is the canonical projection map from $M\times I$ to $M$; similarly, a continuous map
	$$
	\widetilde{\psi}:=\phi\circ p_2: (N\times I, \partial_3(N\times I)) \stackrel{p_2}{\rightarrow} (N, \partial_-N) \stackrel{\psi}{\rightarrow} (X, \partial X)
	$$
	describes the orientation character of $N\times I$, where $p_2$ is the canonical projection map from $N\times I$ to $N$;
	
	(4) a degree one normal map of manifold 3-ads
	$$
	\widetilde{f}:=f\times Id : (N\times I, \partial_k(N\times I); k=1,2,3)\rightarrow (M\times I, \partial_k(M\times I); k=1,2,3)
	$$
	such that $\widetilde{\phi}\circ\widetilde{f}=\widetilde{\psi}$;
	
	(5) the restriction $\widetilde{f}|_{\partial_1 (N\times I)}: \partial_1 (N\times I)\to \partial_1 (M\times I)$ is a degree one normal map (homotopy equivalence) over $X$;
	
	(6) the restriction $\widetilde{f}|_{\partial_2 (N\times I)}: \partial_2 (N\times I)\to \partial_2 (M\times I)$ is a homotopy equivalence over $X$ and it restricts to an infinitesimally controlled homotopy equivalence $\widetilde{f}|_{\partial\partial_{1,2} (N\times I)}: \partial\partial_{1,2} (N\times I)\to \partial\partial_{1,2} (M\times I)$ over $X$;
	
	(7) the restriction $\widetilde{f}|_{\partial_3 (N\times I)}: \partial_3 (N\times I)\to \partial_3 (M\times I)$ is a degree one normal map over $X$.

	Define
	$$
	r_{*}: L_{n+1}(\pi_1X, \pi_1(\partial X), X; \omega)\rightarrow S_n(\pi_1X, \pi_1(\partial X); \omega)
	$$
	by
	$$
	r_{*}(\theta)=\partial_2(\theta)=\theta_2=\{\partial_2 M, (\partial\partial_{1,2} M, \partial\partial_{2,3} M), \phi, \partial_2N, (\partial\partial_{1,2} N, \partial\partial_{2,3} N), \psi, f\},
	$$
    for $\theta=\{M, \partial_{k} M, \phi, N, \partial_{k} N, \psi, f; k=1,2,3.\}$,
	where $\partial\partial_{1,2} M$ means $\partial_+(\partial_2M)$ and $\partial\partial_{2,3} M$ means $\partial_-(\partial_2M)$ (resp. for $N$).
	
	\begin{theorem}
		The homomorphisms $c_*$ and $r_*$ are inverse of each other. In particular, we have $S_n(\pi_1X, \pi_1(\partial X); \omega)\cong L_{n+1}(\pi_1X, \pi_1(\partial X), X; \omega)$.
	\end{theorem}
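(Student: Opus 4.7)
The plan is to verify the two compositions $r_*\circ c_*$ and $c_*\circ r_*$; the first is tautological from the definitions, while the second requires constructing an explicit cobordism in the sense of Definition \ref{equi rel LX}.

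For the identity $r_*\circ c_* = \mathrm{id}$ on $S_n(X,\partial X;\omega)$, fix $\theta = \{M,\partial_\pm M,\phi,N,\partial_\pm N,\psi,f\}$. By the very construction of $c_*(\theta)=\theta\times I$ given above, the $\partial_2$-face of $c_*(\theta)$ is the slice $(M\times\{1\},N\times\{1\})$ with induced decomposition $\partial\partial_{1,2}(M\times I)=\partial_+M\times\{1\}$ and $\partial\partial_{2,3}(M\times I)=\partial_-M\times\{1\}$, carrying the restricted maps $\widetilde\phi|_{M\times\{1\}}=\phi$, $\widetilde\psi|_{N\times\{1\}}=\psi$, and $\widetilde f|_{N\times\{1\}}=f$. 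Under the canonical slice identification $x\leftrightarrow(x,1)$ this returns $\theta$ on the nose, so $r_*c_*(\theta)=\theta$.

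For the identity $c_*\circ r_* = \mathrm{id}$ on $L_{n+1}(\pi_1X,\pi_1(\partial X),X;\omega)$, take $\theta = \{M,\partial_k M,\phi,N,\partial_k N,\psi,f;k=1,2,3\}$. The composite $c_*r_*(\theta)=\partial_2\theta\times I$ has underlying manifold $\partial_2 M\times I$, generally different from $M$, so we must exhibit a 4-ad cobordism witnessing $\theta\sqcup\overline{c_*r_*(\theta)}\sim 0$. The cobordism is formed by gluing $W_1=M\times I$ to $W_2=\partial_2 M\times I\times I$ along the identity identification of the $W_1$-face $\partial_2 M\times I$ with $\partial_2 M\times I\times\{0\}\subset\partial W_2$; $V$ is defined analogously by gluing $N\times I$ to $\partial_2 N\times I\times I$ using a bicollar of $\partial_2 N\subset N$ chosen compatibly with $f$ (which exists since $f|_{\partial_2N}$ is a homotopy equivalence of 2-ads). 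The map $F$ is assembled from $f\times\mathrm{Id}_I$ on $V_1$ and $f|_{\partial_2N}\times\mathrm{Id}_I\times\mathrm{Id}_I$ on $V_2$, and the control maps $\Phi,\Psi$ are pulled back along the appropriate projections to $X$. The surviving boundary strata of $W$ are regrouped into a 4-ad with
\[
\partial_1 W=(M\times\{0\})\sqcup(\partial_2 M\times I\times\{1\}),
\]
whose two components represent $\theta$ and $\overline{c_*r_*(\theta)}$, while the remaining three faces collect $M\times\{1\}$, the side strips $\partial_2 M\times\{0,1\}\times I$, $\partial_1 M\times I$, $\partial_3 M\times I$, $\partial\partial_{1,2}M\times I\times I$, and $\partial\partial_{2,3}M\times I\times I$ in a manner compatible with the corner structure.

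Verifying conditions (1)--(5) of Definition \ref{equi rel LX} is then routine: (1)--(3) follow from the product-and-gluing construction together with the pullback formula for orientation characters; (4), the degree-one-normal-map condition on the non-homotopy-equivalence faces, holds piecewise by the product structure since $f$ is already a degree-one normal map of 3-ads; and (5), the homotopy equivalence of $F|_{\partial_3 V}$ over $X$ with infinitesimally controlled restriction on $\partial\partial_{2,3} V$, reduces precisely to the hypotheses on $\theta$ that $f|_{\partial_2 N}:\partial_2 N\to\partial_2 M$ is a homotopy equivalence of 2-ads and that $f|_{\partial\partial_{1,2}N}$ is infinitesimally controlled, which is exactly the $S_n$-data recorded in $r_*(\theta)$. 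The principal technical obstacle is the careful regrouping of the surviving boundary strata of $W$ into exactly four coherent 4-ad faces with all corner identifications of Definition \ref{equi rel LX} satisfied; this is an intricate but standard piece of collar-and-corner manipulation, analogous to the one employed in the absolute case by Weinberger--Xie--Yu in \cite{WXY18}.
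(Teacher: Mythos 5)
Your proposal is correct and follows essentially the same route as the paper: the identity $r_*c_*=\mathrm{id}$ is checked tautologically on the top slice, and $c_*r_*(\theta)\sim\theta$ is witnessed by exactly the cobordism $(\theta\times I)\cup(\theta_2\times I\times I)$ that the paper uses, glued along (a collar of) the $\partial_2$-face. Your version merely spells out the regrouping of boundary strata and the verification of conditions (1)--(5) in more detail than the paper, which leaves these as implicit; the only cosmetic difference is whether $\theta_2\times I\times\{0\}$ is attached to the lateral face of $M\times I$ or to a collar inside $M\times\{1\}$, and these agree after straightening corners.
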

	\begin{proof}
		First, it is obvious that
		$$
		r_*\circ c_*=Id: S_n(\pi_1X, \pi_1(\partial X); \omega)\rightarrow S_n(\pi_1X, \pi_1(\partial X); \omega).
		$$
		
		Conversely, for any
		$$
		\theta=\{M, \partial_{k} M, \phi, N, \partial_{k} N, \psi, f; k=1,2,3.\}\in L_{n+1}(\pi_1X, \pi_1(\partial X), X; \omega),
		$$
		$c_*r_*(\theta)$ is cobordant to $\theta$ in $L_{n+1}(\pi_1X, \pi_1(\partial X), X; \omega)$. Indeed, Consider the element
		$$
		(\theta\times I)\bigcup\limits_{(\theta_2\times I)\times \{0\}\subseteq\theta\times \{1\}}(\theta_2\times I\times I)
		$$
		where $(\theta_2\times I)\times \{0\}$ is glued to the subset $(\theta_2\times I)\subseteq\theta$ in $\theta\times \{1\}$.
		This produces a cobordism between $c_*r_*(\theta)$ and $\theta$, which completes the proof.
	\end{proof}
	
Put $\widetilde{\partial}_*=\partial_*\circ c_*$. We could replace $L_{n+1}(\pi_1X, \pi_1(\partial X),X; \omega)$ and ${\partial}_*$ by $S_{n}(X, \partial X; \omega)$ and $\widetilde{\partial}_*$ in the long exact sequence in Theorem \ref{Les1}, respectively.

	\begin{theorem}\label{Les}
		We have the following long exact sequence
	\begin{align*}
		\cdots &\stackrel{}{\longrightarrow} S_{n}(X, \partial X; \omega)  \stackrel{\widetilde{\partial}_*}{\longrightarrow} N_{n}(X, \partial X; \omega) \stackrel{i_*}{\longrightarrow} L_{n}(\pi_1X, \pi_1(\partial X); \omega) \\
		&\stackrel{j_*}{\longrightarrow} S_{n-1}(X, \partial X; \omega)  \stackrel{\widetilde{\partial}_*}{\longrightarrow} N_{n-1}(X, \partial X; \omega) \stackrel{}{\longrightarrow} \cdots.
		\end{align*}
	\end{theorem}

	\section{Geometric $C^*$-algebras}\label{sec K theory preparation}
	
	In this section, we introduce the definition of  the relative  equivariant maximal Roe algebra in light of \cite{CWY15}. We shall start with the definition of the equivariant maximal Roe algebra.

 All manifolds and manifolds with boundary considered in the following are oriented.

	\subsection{Maximal Roe algebra}\label{subsec basic notions}
	We first recall the definition of the maximal Roe algebra.

Let $X$ be a proper metric space with bounded geometry. Let $G$ be a discrete group acting freely, cocompactly and properly on $X$. A $G$-equivariant $X$ module $H_{X}$ is a separable Hilbert space equipped with a $*$-representation $\phi$ of $C_0(X)$ and a covariant $G$ action $\pi$ such that
	\[
	\pi(g)(\phi(f)v)=\phi(f^g)(\pi(g)(v)), \ \  \forall g\in G, f\in C_0(X) \ \text{and} \ v\in H_{X},
	\]
	where $f^g(x)=f(g^{-1}x).$ We call $H_{X}$ standard if no nonzero function in $C_0(X)$ acts as a compact operator,  non-degenerate if the $*$-representation $\phi$ of $C_0(X)$ is non-degenerate.
	
	\begin{definition}[cf. \cite{Roe93}]\label{def all three factor}
		Let $H_{X}$ be a $G$-equivariant, standard, and non-degenerate $X$-module.
		\begin{enumerate}
			\item The support $\text{supp}(T)$ of a bounded linear operator $T\in B(H_{X})$ is defined to be the complement of the set of all points $(x,y)\in X\times X$ for which there exist $f,\ g \in C_0(X)$ such that $gTf=0$, $f(x)\neq 0$, $g(y)\neq 0$.
			\item A bounded linear operator $T\in B(H_{X})$ is said to have finite propagation if
			\[
			\text{sup}\{d(x,y): (x,y)\in \text{Supp}(T)\}< \infty.
			\]
			This number will be called the propagation of $T$, and denoted as $\text{propagation}(T)$.
			\item A bounded linear operator $T\in B_{X}$ is said to be locally compact if  $fT$ and $Tf$ are both compact operators for all $f\in C_0(X)$.
		\end{enumerate}
	\end{definition}
	
	Denote by $C[X]^G$ the set of all locally compact, finite propagation $G$-invariant operators on $H_{X}$.
	
	\begin{definition}\label{def of three operator algebra}
		Let $X$ be a proper metric space with bounded geometry. The discrete group $G$ acts on $X$ freely, cocompactly, and properly. Then The maximal Roe algebra $C^*_{max}(X)^G$ is the completion of $C[X]^G$ with respect to the $C^*$-norm
			\[\|T\|_{max} := \text{sup}\{\|\psi(T)\|_{B(H_\psi)}|\  \psi: C[X]^G \to B(H_\psi),\ \text{a } *-\text{representation}\}.\]
			In fact, we have that $C^*_{max}(X)^G\cong C^*_{max}(G)\otimes \mathcal{K},$ where $\mathcal{K}$ is the $C^*$-algebra consists of compact operators.
			%\item The maximal localization algebra $C^*_{L,max}(X)^G$ is the $C^*$-algebra generated by all  bounded and uniformly norm-continuous functions $f: [0,\infty)\to C^*_{max}(X)^G$ such that $
			%\text{the propagation of }f(t)\to 0,\  \text{as } t\to \infty.
			%$
		   %\item The maximal obstruction algebra $C^*_{L,0,max}(X)^G$ is the kernel of the following evaluation map
			%\[
			%\text{ev}: C^*_{L,max}(X)^G\to C^*_{max}(X)^G,\  \text{ev}(f)=f(0).
			%\]
			%\item If $Y$ is a subspace of $X$ and $G$ acts on $Y$ freely and properly, then $C^*_{L, max}(Y, X)^G$ (resp. $C^*_{L,0,\max}(Y;X)^G$) is defined to be the $C^*$-subalgebra of $C^*_{L, max}(X)^G$ (resp. $C^*_{L,0, max}(X)^G$) generated by all elements $f$ such that there exist $c_t>0$ satisfying $\lim_{t \to \infty}c_t=0$ and $$\text{Supp}(f(t))\subset \{(x,y)\in X\times X|d((x,y), Y\times Y)\leq c_t\} $$ for all $t$.
		%\end{enumerate}
	\end{definition}

	\subsection{Relative Roe algebra}\label{subsec relative C alg}
	
	In this subsection, we recall the definition of the relative Roe algebra  in light of \cite{CWY15}.

We start with the following construction.

	\begin{definition}
		Let $\iota: A\to B$ be a $C^*$-algebra homomorphism. We define $C_{\iota: A\to B}$ to be the $C^*$-algebra generated by
		\[
		\{(a,f) : f\in C_0([0,1), B), a\in A, f(0)=\iota(a)  \}.
		\]
	\end{definition}

	For a manifold with boundary $(M, \partial M)$, let $p:\widetilde{M}\to M$  and $p': \widetilde{\partial M}\to \partial M$  be the universal covering maps of $M$ and $\partial M$ respectively, and let $\widetilde{\partial M}'$ be $p^{-1} \partial M$ .
%     Let
%	\[
%	i: \widetilde{\partial M}\to \widetilde {\partial M}'\hookrightarrow \widetilde M
%	\]
%	be the embedding map and
Let 	
$$
	j: \pi_1(\partial M)\to \pi_1(M)
	$$
	be the homomorphism induced by the inclusion of the boundary. Let $\widetilde{\partial M}''$ be the Galois covering space of $\partial M$ whose Deck transformation group is $j\pi_1(\partial M)$. We have $\widetilde{\partial M}'= \pi_1 (M) \times_{j\pi_1 (\partial M)} \widetilde{\partial M}'' $. This decomposition naturally gives rise to a homeomorphism
\begin{equation}\label{eq iota of manifold}
\iota: \widetilde{\partial M}\to \widetilde {\partial M}'\hookrightarrow \widetilde M
\end{equation}
and a  $*$-homomorphism
	\[
	\phi': C^*_{max} (\widetilde{\partial M}'')^{j\pi_1 (\partial M)} \to C^*_{max}(\widetilde{\partial M}')^{\pi_1(M)}\hookrightarrow C^*_{max}(\widetilde{M})^{\pi_1( M)} .
	\]
	Lemma 2. 12 of \cite{CWY15} shows that there is a natural $*$-homomorphism
	\[
	\phi'': C^*_{max} (\widetilde{\partial M})^{\pi_1 (\partial M)} \to  C^*_{max} (\widetilde{\partial M}'')^{j\pi_1 (\partial M)}.
	\]
	Thus
	\[
	\phi'\phi'' : C^*_{max}(\widetilde{\partial M})^{\pi_1(\partial M)}\to C^*_{max}(\widetilde{M})^{\pi_1( M)}
	\]
is  a $C^*$-algebra homomorphism, which will be denoted by $\iota$ with a little abuse of notation.
	
	%Similarly, one can see that $i: \partial M\to M$ also induces the following two $*$-homomorphisms
%	\begin{eqnarray*}
%		i_L &:& C^*_{L,max}(\widetilde{\partial M})^{\pi_1(\partial M)}\to C^*_{L,max}(\widetilde{M})^{\pi_1( M)},\\
%		i_{L,0} &:& C^*_{L,0,max}(\widetilde{\partial M})^{\pi_1(\partial M)}\to C^*_{L,0,max}(\widetilde{M})^{\pi_1( M)}.
%	\end{eqnarray*}
	
	For any $C^*$-algebra $A$, let $SA$ be its suspension algebra.
	\begin{definition}[Relative maximal algebras]\label{def rel max roe}
		For a manifold with boundary $(M, \partial M)$,
			the relative maximal Roe algebra associated to it is then defined as
			\[
			C_{max}^*(\widetilde{M}, \widetilde{\partial M})^{\pi_1(M), \pi_1(\partial M)} := SC_{\iota}.
			\]
			
	\end{definition}
	%Let $[v]$ be a generator of $K_1(C(S^1))$. An element in
	Since all the Roe algebras considered in this paper are maximal ones, we oppress the subscription $max$ in the following. The relative algebras defined above are then denoted by $C^*(\widetilde{M}, \widetilde{\partial M})^{\pi_1(M), \pi_1(\partial M)}$.  No confusion should be arose.
	
%	Let $G$ be $\pi_1(M)$ and $\Gamma$ be $\pi_1(\partial M)$.
%	The following $K$-theory six exact sequence is routine:
%	\[
%	{\small
%		\xymatrix{
%			K_0 (C^*_{L, 0}(\widetilde{M}, \widetilde{\partial M})^{G,\Gamma})\ar[r] &K_0 (C^*_{L}(\widetilde{M}, \widetilde{\partial M})^{G,\Gamma})\ar[r]^{\text{ev}} & K_0 (C^*(\widetilde{M}, \widetilde{\partial M})^{G, \Gamma})  \ar[d]^{\partial}\\
%			K_1 (C^*(\widetilde{M}, \widetilde{\partial M})^{G,\Gamma}) \ar[u]^{\partial} & K_1 (C^*_{L}(\widetilde{M}, \widetilde{\partial M})^{G,\Gamma})\ar[l]_{\text{ev}} & K_1 (C^*_{L, 0}(\widetilde{M}, \widetilde{\partial M})^{G,\Gamma})\ar[l]
%		}
%	}.\]

	\section{Signature of compact PL manifolds}\label{sec relative index}
	
	In this section we recall the definition of the  signature of compact PL manifolds.  %Generally, one can define the signature  for a geometrically controlled Hilbert-Poincar\'e complex (cf. \cite{HR051}, \cite{HR052}). A triangulation of a PL manifold $X$ gives rise to a geometrically controlled Hilbert-Poincar\'e complex over $X$. In this particular case, one can further define the $K$-homology class of the signature operator (cf. \cite{WXY18}). At last, we recite the definition of the higher $\rho$ invariant for a homotopy equivalence between two PL manifolds.
The readers are referred to \cite{HR051}, \cite{HR052} and \cite{WXY18} for more details.

\subsection{Analytically controlled Hilbert-Poincar\'e complex}

In this subsection, we recall the definition of the analytically controlled Hilbert-Poincar\'e complex. We first introduce the definition of the analytically controlled operator.

Let $X$ be a proper metric space with bounded geometry and $G$ be a discrete group acting freely, cocompactly,  and properly on $X.$

\begin{definition}
Let $H_0$ and $H_1$ be two $G$-equivariant $X$-module. A bounded operator $T:H_0\to H_1$ is said to be $G$-equivariant  analytically controlled over $X$ if it is the norm limit of  $G$-equivariant, locally compact and finite propagation bounded operators.
\end{definition}

Now we define the $G$-equivariant analytically controlled complex.

\begin{definition}
A chain complex
\[
(H_{*,X}, b)^G: H_{n,X}\stackrel{b}{\rightarrow }  H_{n-1, X}\stackrel{b}{\rightarrow } \cdots \stackrel{b}{\rightarrow } H_{1,X}\stackrel{b}{\rightarrow } H_{0, X},\]
is called an $n$-dimensional $G$-equivariant  analytically controlled Hilbert complex over $X$ if each $H_p$ is $X$-module and each $b$ is $G$-equiavariant  analytically controlled over $X$.
\end{definition}

Now let us recall the definition of the $G$-equivariant analytically controlled  chain homotopy equivalence between $G$-equivariant analytically controlled Hilbert complexes.

\begin{definition}\label{def ana control homotopy of hil complex bounded}
A chain homotopy equivalence
\[
A: (H_{*, X}, b)^G \to (H'_{*,X}, b')^G
\]
 between $G$-equivariant analytically controlled Hilbert complexes over $X$, is said to be $G$-equivariant analytically controlled over $X$ if
\begin{enumerate}
\item $A$ is $G$-equivariant analytically controlled over $X,$
\item there exist $G$-equivariant analytically controlled  chain maps
\[B: (H'_{*,X}, b')^G \to (H_{*,X}, b)^G, \]
and $G$-equivariant analytically controlled operators $y, \ y'$ with degree $1$, i.e.
\[
y: H_{i, \widetilde{N}} \to H_{i+1, \widetilde{N}}, \ y': H'_{i, \widetilde{N}} \to H'_{i+1, \widetilde{N}},
\]
such that
    \[
    I-AB= b'y'+y'b', I-BA = by+yb.
    \]
\end{enumerate}
\end{definition}

The analytically controlled Hilbert-Poincar\'e complex is an analytically controlled Hilbert complex equipped with the Poincar\'e duality.

\begin{definition}\label{def hilbert Poincare complex bounded}
 A $G$-equivariant analytically controlled Hilbert-Poincar\'e complex over $X$, denoted as $(H_{*,X}, b, T)^G,$ is a $G$-equivariant analytically controlled Hilbert complex over $X$
 \[
 (H_{*,X}, b)^G: H_{n,X}\stackrel{b}{\rightarrow }  H_{n-1, X}\stackrel{b}{\rightarrow } \cdots \stackrel{b}{\rightarrow } H_{1,X}\stackrel{b}{\rightarrow } H_{0, X},\]
 equipped  with adjointable bounded operator $T: H_{*, X}\to H_{n-*, X},$ such that
 \begin{enumerate}
 \item  $T^*(v)=(-1)^{(n-p)p}T(v),$ if $v\in  H_{p, X},$
   \item $Tb^*(v) + (-1)^pbT(v)=0$,  if $v\in  H_{p, X},$
   \item $T$ is a $G$-equivariant analytically controlled chain homotopy equivalence over $X$ from the dual complex
       \[(H_{n-*,X}, b^*)^G: H_{0, X}\stackrel{b^*}{\rightarrow }  H_{1, X}\stackrel{b^*}{\rightarrow } \cdots \stackrel{b^*}{\rightarrow } H_{n-1,X}\stackrel{b^*}{\rightarrow } H_{n, X}\]
to  $(H_{*,X}, b)^G.$

In the following, we will call $T$ the Poincar\'e duality operator of $(H_{*,X}, b)^G.$
 \end{enumerate}

\end{definition}

We mention that one need appropriate  signs to make $T$ into a genuine chain map, however for the sake of conciseness, we leave it as is. The reader should not be confused.

Correspondingly, we have the following notion of the $G$-equivariant analytically controlled homotopy equivalence between Hilbert-Poincar\'e complexes.

\begin{definition}\label{def ana control homotopy of hil poinc complex bounded}
Let $(H_{*, X}, b, T)^G$ and $(H'_{*, X}, b', T')^G$ be two $G$-equivariant analytically controlled Hilbert-Poincar\'e complexes over $X$.
Let
\[
A: (H_{*,X}, b)^G \to (H'_{*,X}, b')^G
\]
be a  $G$-equivariant analytically controlled chain homotopy equivalence. Then the homotopy equivalence  $A$ is said to be $G$-equivariant analytically controlled chain homotopy equivalence between  $(H_{*,X}, b, T)^G$ and $(H'_{*,X}, b', T')^G$, if
\[
T', ATA^* :   (H'_{n-*,X}, (b')^*)^G\to (H'_{*,X}, b')^G.
\]
are analytically controlled homotopy equivalent to each other,  i.e.  there exist $G$-equivariant analytically controlled operators $y: H_{*,X} \to H_{n-*-1,X}$, such that
\[
ATA^*-T'=yb^*+by .
\]

In the following, T is called the duality operator of the controlled Hilbert-Poincar\'e complex $(H_{*, X}, b, T)^G .$

\end{definition}

\subsection{Signature of Hilbert-Poincar\'e complexes}

In this subsection, we recall the definition of the signature of $G$-equivariant analytically controlled  Hilbert-Poincar\'e complexes.

\begin{definition}\label{chiral duality}
Let $(H_{*, X}, b, T)^G$ be an $n$-dimensional $G$-equivariant analytically controlled Hilbert-Poincar\'e complex over $X,$ let $l$ be $[\frac{n}{2}].$ Set $\gamma=i^{p(p-1)+l}, p=0,1,\cdots, n.$ Define the chirality duality operator $S:H_{*, X}\to H_{n-*, X}$ to be the bounded self-adjoint operator such that
\[
S(v)=\gamma T(v), \forall v\in H_{p,X}.
\]

\end{definition}

It is straightforward to verify that $S=S^*,$ and that $bS+Sb^*=0.$ In \cite{HR051}, Higson and Roe proved that  both of $b +b^*\pm S$ are self-adjoint invertible operators (\cite{HR051}). Set $B:= b +b^*$.
The following is the definition of the signature of 	 $(H_{*,X}, b, T)^G$:
	\begin{definition}\label{def signature of hp complex}
		\begin{enumerate}
			\item Let $(H_{*,X}, b, T)^G$ be an odd dimensional $G$-equivariant analytically controlled Hilbert-Poincar\'e complex over $X$. It was shown in \cite{HR051} that the following operator
			\[
			\frac{B+S}{B-S}: H_{ev,X}\to H_{ev,X}
			\]
belongs to $(C^*(X)^G)^+$, where $H_{ev,X}$ equals $\oplus_{k}H_{2k,X}$. The signature of $(H_{*,X}, b, T)^G$ is then defined to be the $K_1(C^*(X)^G)$ class represented by
\[\frac{B+S}{B-S}: H_{ev,X}\to H_{ev,X}.\]

			\item  Let $(H_{*,X},b, T)^G$ be an even dimensional $G$-equivariant analytically controlled Hilbert-Poincar\'e complex over $X$. It was shown in \cite{HR051} that $P_+(B\pm S)$, the positive spectral projection of $B\pm S$ can be approximated by finite propagation operators, and that
\[
P_+(B+S)-P_+(B-S),
\]
lies in $C^*(X)^G.$ Thus the formal difference $[P_+(B+S)]-[P_+(B-S)]$ determines a class in  $K_0(C^*(X)^G)$.
The signature of  $(H_{*,X}, b, S)^G$  is then defined to be the class in  $K_0(C^*(X)^G)$ determined by
			\[
			[P_+(B+S)]-[P_+(B-S)].
			\]
		\end{enumerate}
	\end{definition}

In the following, we denote the signature of  $(H_{*,X}, b, T)^G$, an $n$-dimensional $G$-equivariant analytically controlled Hilbert-Poincar\'e complex over $X$, by
\[
\text{Ind} (H_{*,X}, b, T)^G\in K_n(C^*(X)^G).
\]

\subsection{Homotopy invariance of the signature of Hilbert-Poincar\'e complexes}\label{subsec homo inv of sig of complex}

In this subsection, we recall the proof of the homotopy invariance of the signature of $G$-equivariant analytically controlled Hilbert-Poincar\'e complexes.

%\begin{definition}[\cite{HR051}, Definition 4.4]
%		Let $(H_{*,X}, b)^G$ be a $G$-equivariant Hilbert complex over $X$. An operator homotopy of $G$-equivariant  analytically controlled  Hilbert-Poincar\'e complex over $X$  is a norm continuous family of adjointable operators $T_s, \ (s\in [0,1])$ such that each $(H_{*,X}, b, T_s)^G$ is a   $G$-equivariant  analytically controlled  Hilbert-Poincar\'e complex .
%	\end{definition}
%	
%	\begin{lemma}[\cite{HR051},Lemma 4.6 ]\label{lem operator homotopy}
%		If a duality operator $T$ on a Hilbert-Poincar\'e complex is operator homotopic to $-T$, then the signature of $(E, b, T)$ is trivial.
%	\end{lemma}

	Let
\[
f: (H'_{*,X}, b', T')^G\to (H''_{*,X}, b'', T'')^G
\]
be a $G$-equivariant analytically controlled homotopy equivalence between two $G$-equivariant analytically controlled Hilbert-Poincar\'e complexes over $X.$ Recall that the chirality duality operator $S'=\gamma T'$ and $S''=\gamma T''.$
 Then
	\begin{equation}\label{eq a complex in 4}
	(H'_{*,X}\oplus H''_{*,X}, \left(\begin{array}{cc}
	b' & 0\\
	0 & b''
	\end{array} \right), \left(\begin{array}{cc}
	T' & 0\\
	0 & -T''
	\end{array} \right) )^G
	\end{equation}
	is a $G$-equivariant analytically controlled Hilbert-Poincar\'e complex over $X$.
	Higson and Roe built an explicit homotopy path connecting the representative of
\[
\text{Ind}(H'_{*,X}\oplus H''_{*,X}, \left(\begin{array}{cc}
	b' & 0\\
	0 & b''
	\end{array} \right), \left(\begin{array}{cc}
	T' & 0\\
	0 & -T''
	\end{array} \right) )^G
\]
to the identity or zero element in \cite{HR051}. We describe this homotopy path in details for the odd dimensional case only. The even dimensional case is completely similar.
	Set
	\[
	B=\left(\begin{array}{cc}
	b' & 0\\
	0 & b''
	\end{array} \right)+\left(\begin{array}{cc}
	b' & 0\\
	0 &b''
	\end{array} \right)^*, S= \left(\begin{array}{cc}
	S' & 0\\
	0 & -S''
	\end{array} \right) .
	\]
	Then the  signature of complexes defined in line \eqref{eq a complex in 4} is represented by
	\[
	\frac{B+S}{B-S}.
	\]
	
	From \cite{HR051} and \cite{WXY18}, we know that the following are all $G$-equivariant analytically controlled Hilbert-Poincar\'e complexes over $X$:
	\[(H'_{*,X}\oplus H''_{*,X}, \left(\begin{array}{cc}
	b' & 0\\
	0 & b''
	\end{array} \right), T_f(s))^G, s\in[0,\frac{2}{3}],\]
where $T_f(s)$ equals
\[
\left(\begin{array}{cc}
	T' & 0\\
	0 & (3s-1)T'' -3sfT'f^*
	\end{array} \right)
\]
for $s\in [0,\frac{1}{3}],$ and equals
\[
\left(\begin{array}{cc}
	\cos((3s-1)\frac{\pi}{2})T' & \sin((3s-1)\frac{\pi}{2})T'f^* \\
	\sin((3s-1)\frac{\pi}{2})gT' & -\cos((3s-1)\frac{\pi}{2})fT'f^*
	\end{array} \right)
\]
for $s\in  [\frac{1}{3},\frac{2}{3}].$
	Thus the following
	\[
			\frac{B+ S_f(s) }{B- S_f(s)},  s\in [0,\frac{2}{3}]
      \]
	forms an invertible path in  $C^*(X)^G,$ where $S_f(s)$ is the corresponding chirality duality operator of $T_f(s).$

	Note that the following are still $G$-equivariant analytically controlled Hilbert-Poincar\'e complexes over $X$:
	\[(H'_{*,X}\oplus H''_{*,X}, \left(\begin{array}{cc}
	b' & 0\\
	0 & b''
	\end{array} \right), \left(\begin{array}{cc}
	0 & e^{i s} T'f^* \\
	e^{-i s} fT_{\widetilde{M}} & 0
	\end{array} \right) )^G, \ s\in [0,1].
	\]
	Thus we can connect
\[
\frac{B+ S_f(\frac{2}{3}) }{B- S_f(\frac{2}{3})}
\]
to the identity by the path
	\[
	\frac{B+\left(\begin{array}{cc}
		0 & S_{\widetilde{M}}f^* \\
		fS_{\widetilde{M}} & 0
		\end{array} \right)}{B-\left(\begin{array}{cc}
		0 & e^{i(3s-2)\pi } S_{\widetilde{M}}f^* \\
		e^{-i(3s-2)\pi }fS_{\widetilde{M}} & 0
		\end{array} \right)},\ \  s\in [\frac{2}{3},1].
	\]
	In a word,, we obtain an invertible path in $C^*(X)^G$ connecting
\[
\frac{B+S}{B-S}
\]
 to the identity. In the following, we will denote this path by
	\begin{equation}\label{eq odd sig homo path}
	\frac{B_f+ S_f}{B_f-S_f}(s),\ \  s\in [0,1]
     \end{equation}
	where
	\[
	\frac{B_f+ S_f}{B_f-S_f}(0)= \frac{B+S}{B-S},\ \ \frac{B_f+ S_f}{B_f-S_f}(1)= I.
	\]
Note that this path is derived from a continuous family of $G$-equivariant analytically controlled Hilbert-Poincar\'e complexes, which will be denoted as
\begin{equation}\label{eq sig homo complex path}
(H'_{*,X}\oplus H''_{*,X} , \begin{pmatrix}
b' & 0 \\
0 & b''
\end{pmatrix}, T_f(s))^G, s\in [0,1].
\end{equation}

In even case, the path will be denoted by
	\begin{equation}\label{eq even sig homo path}
	P_+(B_f+S_f)-P_+(B_f-S_f).
	\end{equation}

The path defined above actually proves the homotopy invariance of the signature of Hilbert-Poincar\'e complexes, i.e.
\begin{proposition}[Theorem 5.12, \cite{HR051}]\label{prop homo inv of sig of com}
Let
\[
f: (H'_{*,X}, b', T')^G\to (H''_{*,X}, b'', T'')^G
\]
be a $G$-equivariant analytically controlled homotopy equivalence between two $n$-dimensional $G$-equivariant analytically controlled Hilbert-Poincar\'e complexes over $X,$ then we have
\[
\text{Ind}(H'_{*,X}, b', T')^G=\text{Ind}(H''_{*,X}, b'', T'')^G\in K_n (C^*(X)^G).
\]
\end{proposition}
\begin{proof}
We prove this proposition for the odd case only, the even case is parallel. Set $B'=b'+(b')^*$ and $B''=b''+(b'')^*$. Then it is sufficient to consider the path
\[
\begin{pmatrix}
\frac{B'+S'}{B'-S'}  &0\\
0  & I
\end{pmatrix}\left(\frac{B_f+ S_f}{B_f-S_f}\right)^{-1}(1-s), s\in [0,1].
\]
\end{proof}

\subsection{Analytically controlled Hilbert-Poincar\'e pair}

In this subsection, we recall the definition of the $G$-equivariant analytically controlled Hilbert-Poincar\'e pair, which is used in the next subsection to prove the bordism invariance of the  signature of complexes, and in the next section to define the relative signature.

Let $X$ be a proper metric space and $G$ be a discrete group acting on  $X$ freely, cocompactly, and properly.
\begin{definition}[Definition 7.2, \cite{HR051}]\label{def hil poin pair}
		An $(n+1)$-dimensional $G$-equivariant analytically controlled Hilbert-Poincar\'e pair over $X$ is a  $G$-equivariant analytically controlled Hilbert complex $(H_{*,X}, b)^G,$ together with a $G$-equivariant analytically controlled operator $T: H_{*,X}\to H_{n+1-*.X}$ and a $G$-equivariant analytically controlled projection $P:  H_{*,X}\to  H_{*,X}$ such that
		\begin{enumerate}
			\item  $PbP=bP$, hence the orthogonal projection $P$ determines a subcomplex, $(PH_{*,X}, Pb)^G$, of $(H_{*,X}, b)^G$.  Note that $ bP^\perp=P^\perp bP^\perp,$ thus the complex $(P^\perp H_{*,X}, P^{\perp}b)^G$  is  the corresponding quotient complex of the subcomplex $(P H_{*,X}, Pb)^G.$
			\item The range of the operator $Tb^*+(-1)^pbT: H_{p,X}\to H_{n-p,X}$ is contained within the range of $P: H_{n-p,X}\to H_{n-p,X}$.
			\item $T^* = (-1)^{p(n+1-p)}T: H_{p,X}\to H_{n+1-p,X}$.
			\item $P^\perp T$ is a $G$-equivariant analytically controlled chain homotopy equivalence from the dual complex $(H_{*,X}, b^*)^G$ to $ (P^\perp H_{*,X}, P^\perp b)^G$.
		\end{enumerate}
We will denote this pair by
\[
(H_{*,X}, b, T, P)^G.
\]
	\end{definition}

Note that by definition,
\[
Pb=b: PH_{p,X}\to  PH_{p-1,X},
\]
hence $(PH_{*,X},Pb)^G$ is a $G$-equivariant analytically controlled Hilbert complex over $X$.
Correspondingly, the adjoint of $Pb$ is
\[
Pb^*:  PH_{p-1,X}\to  PH_{p,X},
\]
and the dual complex of $(PH_{*,X},Pb)^G$ is $(PH_{n-*,X},Pb^*)^G.$

The next lemma plays a central role in formulating the bordism invariance of the signature of complexes.
	\begin{lemma}[Lemma 7.4, \cite{HR051}]
		Let $(H_{*,X}, b, T, P)^G$ be an $n+1$ dimensional $G$-equivariant analytically controlled Hilbert-Poincar\'e  pair. Then the operator $T_0=Tb^*+(-1)^pbT: H_{p,X}\to H_{n-p,X}$ satisfies the following conditions:
		\begin{enumerate}
			\item $T_0^*=(-1)^{(n-p)p}T_0: H_{p,X}\to H_{n-p,X}$.
			\item $T_0=PT_0=T_0P$.
			\item $T_0b^*(v)+(-1)^pbT_0(v)=0, \forall v\in  PH_{p,X}$.
			\item $T_0$ induces a $G$-equivariant analytically controlled homotopy equivalence from  $(PH_{n-*,X}, Pb^*)^G$ to $(PH_{*,X}, Pb)^G.$
		\end{enumerate}
	\end{lemma}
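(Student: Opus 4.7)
The plan is to treat parts (1)--(3) as short algebraic identities that fall out of the hypotheses on $(E,b,T,P)$, and to prove part (4) by leveraging the assumed chain homotopy equivalence $P^\perp T\colon (E,b^*)\to (P^\perp E,P^\perp b)$ together with the short exact sequence of chain complexes
\[
0\to (PE,Pb)\to (E,b)\to (P^\perp E,\overline{b})\to 0
\]
and its dual, via a five-lemma style argument for geometrically controlled chain homotopy equivalences.

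For (1) I would compute
\[
T_0^*=(Tb^*)^*+(-1)^p(bT)^*=bT^*+(-1)^p T^*b^*,
\]
substitute the sign relation for $T^*$ from the definition of a Hilbert-Poincar\'e complex pair (tracking carefully the degree on which each instance of $T^*$ acts), and collect signs to recover $T_0^*=(-1)^{(n-p)p}T_0$. For (2), the hypothesis that the range of $Tb^*+(-1)^p bT$ lies in the range of $P$ gives $PT_0=T_0$ immediately; the other equality $T_0 P=T_0$ follows by taking adjoints of $PT_0=T_0$ and invoking the symmetry just proved in (1). For (3), expanding $T_0 b^*+(-1)^p bT_0$ on $v\in E_p$ and applying $b^2=0=(b^*)^2$ kills the diagonal contributions $T(b^*)^2 v$ and $b^2 Tv$, and the two remaining cross terms $(-1)^{p+1}bTb^*v$ and $(-1)^p bTb^*v$ cancel because the sign in the definition of $T_0$ at degree $p+1$ differs from that at degree $p$ by $-1$.

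Part (4) is the substantive step. The idea is to fit $T_0$, $T$, and $P^\perp T$ into a morphism of short exact sequences of chain complexes relating $(\cdot, b^*)$ to $(\cdot, b)$ on the subcomplex $PE$, the total complex $E$, and the quotient $P^\perp E$ respectively, and then to apply a five-lemma style comparison at the level of mapping cones. By hypothesis $P^\perp T$ is a geometrically controlled chain homotopy equivalence from $(E,b^*)$ to $(P^\perp E, P^\perp b)$; meanwhile the algebraic identity $T_0=Tb^*+(-1)^p bT$ says exactly that $T$ is a chain map from $(E,b^*)$ to $(E,b)$ modulo the subcomplex $(PE, Pb)$. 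Propagating the homotopy equivalence property through the resulting diagram of mapping cones then forces $T_0$ to be a geometrically controlled chain homotopy equivalence from $(PE, Pb^*)$ to $(PE, Pb)$.

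The main obstacle will be to preserve geometric control throughout. An abstract five lemma at the level of chain complexes yields only a quasi-isomorphism, whereas we require each intervening chain homotopy to have finite propagation. I would therefore construct the homotopy inverse of $T_0$ explicitly in the form $PSP$, where $S$ is assembled from $T$, $P$, $b$, $b^*$ together with the geometrically controlled homotopy inverse of $P^\perp T$ provided by the hypothesis; propagation bounds for the composite then flow directly from those of the input operators, which keeps the entire construction in the geometrically controlled category.
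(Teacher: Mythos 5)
Your handling of (1)--(3) is correct and is the standard computation: (1) is sign bookkeeping using the duality symmetry of $T$ at the two relevant degrees, (2) follows from the range condition in the definition of a pair (which gives $PT_0=T_0$) together with self-adjointness of $P$ and part (1), and (3) is the cancellation produced by the degree shift in the sign $(-1)^p$. For the record, this paper offers no proof of the lemma at all --- it quotes it from Lemma~5.4 of \cite{WXY18} --- so the comparison below is with the Higson--Roe argument, whose controlled version this paper reproduces (in the more elaborate $2$-ads setting) in its lemmas on the complexes $(\widetilde{E}, b_{\lambda,\mu}, T'_s)$ and the comparison map $A$.

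The gap is in (4). The morphism of short exact sequences you describe, with $T$ as the middle vertical arrow from $(E,b^*)$ to $(E,b)$ and with $T_0$ and $P^\perp T$ on the ends, does not exist: $T$ is not a chain map between these two complexes --- the obstruction is precisely $T_0=Tb^*+(-1)^pbT\neq 0$ --- so the squares you need do not commute with $T$ in the middle, and your observation that $T$ is a chain map ``modulo $(PE,Pb)$'' only recovers the quotient chain map $P^\perp T$; it does not assemble the two rows into a comparable diagram. The argument that actually works replaces $(PE,Pb)$ by the mapping cone $\widetilde{E}_p=E_p\oplus P^\perp E_{p+1}$ with differential $\widetilde{b}_{-1}$ and duality $\widetilde{T}=\begin{pmatrix}0&TP^\perp\\(-1)^pP^\perp T&0\end{pmatrix}$. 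One applies the two-out-of-three lemma for controlled homotopy equivalences (Lemma 4.2 of \cite{HR051}) to the degreewise split sequence $0\to P^\perp E[1]\to\widetilde{E}\to E\to 0$ and its dual, whose outer vertical maps are $P^\perp T$ and $TP^\perp$ and hence equivalences by hypothesis, to conclude that $\widetilde{T}$ is a controlled duality; the inclusion $A(v)=(v,0)$ is a controlled equivalence $(PE,Pb)\to(\widetilde{E},\widetilde{b}_{-1})$; and an explicit controlled homotopy of the form $AT_0A^*-\widetilde{T}=hb^*+(-1)^pbh$, with $h$ built from $TP^\perp$ (this is exactly the verification carried out in the paper's lemmas on $2$-ads), identifies $T_0$ with $\widetilde{T}$ up to controlled homotopy. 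Your closing remark about constructing an inverse of the form $PSP$ points in the right direction, but as written the construction is not supplied, and (4) is the only substantive assertion of the lemma.
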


	The above lemma asserts that $(PH_{*,X}, Pb, T_0 )^G$ is a $G$-equivariant analytically controlled Hilbert-Poincar\'e complex, which  will be called the boundary complex of the pair $(PH_{*,X}, b, T, P)^G$.
\subsection{Bordism invariance of the signature of Hilbert-Poincar\'e complexes}\label{subsec bord inv of sig of complex}

In this subsection, we recall the  formulation and the proof of the bordism invariance of the signature of  $G$-equivariant analytically controlled Hilbert-Poincar\'e complexes.
	
The following proposition formulates the bordism invariance of the signature of $G$-equivariant analytically controlled Hilbert-Poincar\'e  complexes.
\begin{proposition}[Theorem 7.6, \cite{HR051}]\label{prop bord inv of sig of com}
Let  $(H_{*,X}, b, T, P)^G$ be an $n+1$ dimensional $G$-equivariant analytically controlled Hilbert-Poincar\'e  pair over $X$, $(PH_{*,X}, Pb, T_0 )^G$ be its boundary complex. Then we have
\[
\text{Ind}(PH_{*,X}, Pb, T_0 )^G=0\in K_n(C^*(X)^G).
\]
\end{proposition}

We briefly recall the proof of the above Proposition as follows. Set
	\[
	\overline{H}_{p,X}= H_{p,X} \oplus P^\perp H_{p+1,X},\ \  \overline{b}(\lambda) =
	\left(
	\begin{array}{cc}
	b & 0\\
	\lambda P^\perp & P^\perp b
	\end{array}
	\right), \ \lambda\in [-1,0].
	\]
	Then $(\overline{H}_{p,X},\overline{b}(\lambda))^G$ is a $G$-equivariant analytically controlled Hilbert-Poincar\'e complex over $X$. The following family of operators
	\[
	\overline{T}(s)=\left(
	\begin{array}{cc}
	0 & e^{is\pi}TP^\perp \\
	(-1)^pe^{-is\pi} P^\perp T & 0
	\end{array}
	\right): \overline{H}_{p,X}\to \overline{H}_{n-p,X}
	\]
	are $G$-equivariant analytically controlled duality operators of  $(\overline{H}_{*,X}, \overline{b}(\lambda))^G$ as long as $\lambda s=0$, i.e.
\begin{equation}\label{eq rel homology complex}
(\overline{H}_{*,X}, \overline{b}(\lambda), \overline{T}(s))^G
\end{equation}
 is a $G$-equivariant analytically controlled Hilbert-Poincar\'e complex as long as $\lambda s=0$.

	 Note that
\begin{eqnarray*}
A: PH_{*,X} &\to& \overline{H}_{*,X}=H_{*,X}\oplus P^\perp H_{*+1,X}\\
A(v)&=&v\oplus 0
\end{eqnarray*}
 defines a $G$-equivariant analytically controlled chain homotopy equivalence
	\[
	A: (PH_{*,x}, Pb, T_0)^G\to (\overline{H}_{*,X}, \overline{b}(-1), \overline{T}(0))^G.
	\]
	Moreover, for $(\overline{H}_{*,X}, \overline{b}(0))^G$, Poincar\'e duality operator $\overline{T}(0)$ is connected to $\bar{T}(1)=-\bar{T}(0)$ along the path of Poincar\'e duality operators $\bar{T}(s), s\in [0,1].$

	Thus, we obtain a path connecting the representative of the signature of $(PH_{*,X}, Pb, T_0)$ to the trivial element. When $n$ is odd,  we denote this path by
	\begin{equation}\label{eq odd sig bord path}
\frac{B_{P} + S_P }{ B_{P} - S_P  },
      \end{equation}
	where
	$$\frac{B_{P} + S_P }{ B_{P} - S_P  }(t),\  t\in [0,1]$$
	equals
\[
\begin{pmatrix}
\frac{Pb+Pb^*+S_0}{Pb+Pb^*-S_0} & 0\\
0 & I
\end{pmatrix}
\left(
\frac{B_A +S_A}{B_A-S_A}
\right)^{-1}(1-3t)
\]
when $t\in[0,\frac{1}{3}],$ equals
%	\[\begin{pmatrix}
%I & 0\\
%0 &	\frac{\begin{pmatrix}
%		b+b^* & -(2-3t) P^\perp\\
%		-(2-3t) P^\perp & P^\perp b +  P^\perp b^*
%		\end{pmatrix} +\begin{pmatrix}
%		0 & SP^\perp \\
%		(-1)^p P^\perp S & 0
%		\end{pmatrix} }{\begin{pmatrix}
%		b+b^* & -(2-3t) P^\perp\\
%		-(2-3t) P^\perp & P^\perp b +  P^\perp b^*
%		\end{pmatrix} -
%		\begin{pmatrix}
%		0 & SP^\perp \\
%		(-1)^p P^\perp S & 0
%		\end{pmatrix} }
%\end{pmatrix}\]
\[\begin{pmatrix}
I & 0\\
0 &	\frac{\bar{b}(3t-2)+\bar{b}^*(3t-2)+\bar{S}(0) }{\bar{b}(3t-2)+\bar{b}^*(3t-2)-\bar{S}(0) }
\end{pmatrix}\]
	when $ t\in [\frac{1}{3}, \frac{2}{3}] $, and equals
%\[
%\begin{pmatrix}
%I & 0 \\
%0	& \frac{
%		\begin{pmatrix}
%		b+b^* & 0 \\
%		0 & P^\perp b +  P^\perp b^*
%		\end{pmatrix} +
%		\begin{pmatrix}
%		0 & SP^\perp \\
%		(-1)^p P^\perp S & 0
%		\end{pmatrix}
%		}{
%		\begin{pmatrix}
%		b+b^* &  0\\
%		0 & P^\perp b +  P^\perp b^*
%		\end{pmatrix}
%		 -
%		\begin{pmatrix}
%		0 & e^{i (3t-2)\pi }SP^\perp \\
%		(-1)^p e^{-i (3t-2)\pi }P^\perp S & 0
%		\end{pmatrix} }
%\end{pmatrix}\]
\[
\begin{pmatrix}
I & 0 \\
0	& \frac{\bar{b}(0)+\bar{b}^*(0)+\bar{S}(0) }{\bar{b}(0)+\bar{b}^*(0)-\bar{S}(3t-2) }
\end{pmatrix}\]
	when $t\in [\frac{2}{3}, 1]$.
	
	Similarly, in even case, the path will be denoted by
	\begin{equation}\label{eq even sig bord path}
	P_+(B_P+ S_P)-P_+(B_P-S_P).
	\end{equation}
Note that the above path proving the bordism invariance of the signature of Hilbert-Poincar\'e complexes is generated from a continuous family of Hilbert-Poincar\'e complex, which will be denoted as
\begin{equation}\label{eq sig bord complex path}
(\overline{H}_{p,X}, \overline{b}(\lambda), \overline{T}(s))^G, \lambda\in [-1,0], s\in [0,1], \lambda s=0.
\end{equation}

\subsection{Signature of compact PL manifolds}\label{subsec sig of mani}

In this subsection, we introduce the definition of the signature of compact PL manifolds.

For an $n$-dimensional compact PL manifold $N$ with fundamental group $G$, let $\widetilde{N}$ be the universal convering space of $N$. Equip $\widetilde{N}$ with a $G$-invariant triangulation $\text{Tri}(\widetilde{N} )^G$.  The  $L^2$-completion of the  simplicial chain complex $(E_*(\widetilde{N}) , b_{\widetilde N})$ given by the triangulation then induces a $G$-equivariant analytically controlled Hilbert complex over $\widetilde{N}$,
\[(L^2(E_*(\widetilde{N})), b_{\widetilde N})^G.\]
Equipped with the Poincar\'e duality map $T_{\widetilde N}$ which is  given by  the usual cap product with the fundamental class $[\widetilde N]$,
\[
(L^2(E_*(\widetilde{N})), b_{\widetilde N},T_{\widetilde N} )^G
\]
defines a $G$-equivariant analytically controlled Hilbert-Poincar\'e complex over $\widetilde{N}.$

\begin{definition}
Let $N$ be an $n$-dimensional compact PL manifold with fundamental group $G$, and $\widetilde{N}$ be the universal covering space of $N.$ Take a  $G$-invariant triangulation $\text{Tri}(\widetilde{N} )$ of $\widetilde{N}.$ Consider
\[
(L^2(E_*(\widetilde{N})), b_{\widetilde N},T_{\widetilde N} )^G,
\]
 the corresponding $G$-equivariant analytically controlled Hilbert-Poincar\'e complex over $\widetilde{N}.$ Then we define $\text{Ind}(N)\in K_n(C^*(\widetilde N)^G),$ the signature of $N,$ to be the signature of the complex
\[
\text{Ind}(L^2(E_*(\widetilde{N})), b_{\widetilde N},T_{\widetilde N} )^G.
\]
It is well defined since the signature of  $G$-equivariant analytically controlled Hilbert-Poincar\'e complexes is homotopy invariant.
\end{definition}	

By the argument in the Subsection \ref{subsec homo inv of sig of complex}, we know that the signature  is a homotopy invariant of compact  PL manifolds.

On the other hand, the argument in the Subsection \ref{subsec bord inv of sig of complex} proves that the signature of compact PL manifolds is a bordism invariant. In fact, let $(N, \partial N)$ be an $n+1$-dimensional compact PL manifold with boundary, let $\Gamma$ be the fundamental group of $N$ and $G$ be the fundamental group pf $\partial N.$ Let $p:\widetilde N\to N$  be the universal covering of $N$, and $\widetilde{\partial N}$ be the universal covering space of $\partial N.$ Let $\widetilde{\partial N}'=p^{-1}\partial N$ be the $\Gamma$-Galois covering space of $\partial N$. Take a triangulation $\text{Tri}(N, \partial N)$ of  $(N,\partial N).$ Then one can lift $\text{Tri}(N, \partial N)$ up to a$(\widetilde N, \widetilde{\partial N}')$ as a $\Gamma$-equivariant triangulation $\text{Tri}(\widetilde N, \widetilde{\partial N}')^{\Gamma}$, and lift the restriction of  $\text{Tri}(N, \partial N)$ on $\partial N$ up to $\widetilde{\partial N}$ as a $G$-equivariant triangulation $\text{Tri}( \widetilde{\partial N})^G.$ Then the $L^2$-completion of the simplicial chain complex $(L^2(E_*(\widetilde{N})) , b_{\widetilde N})^\Gamma$ induced by $\text{Tri}(\widetilde N, \widetilde{\partial N}')^{\Gamma}$ forms a $\Gamma$-equivariant analytically controlled Hilbert complex over $\widetilde N$. Consider the Poincar\'e duality operator $T$ induces by the cap product with the fundamental class $[\widetilde N]$ and the usual projection $P$ onto the complex on $\widetilde{\partial N}'$, the following
\[
(L^2(E_*(\widetilde{N})) , b_{\widetilde N}, T, P)^\Gamma
\]
becomes a $\Gamma$-equivariant analytically controlled Hilbert-Poincar\'e pair over $\widetilde N$. Parallelly, we have the following $G$-equivariant analytically controlled Hilbert-Poincar\'e complex over $\widetilde{\partial N}$,
\[(L^2(E_*(\widetilde{\partial N})) , b_{\widetilde{ \partial {N}}}, T_{\partial})^G\]
which is consists of the $L^2$-completion of the simplicial chain complex of $\text{Tri}( \widetilde{\partial N})^G,$ and the Poincar\'e duality operator induced by the cap product with $[\widetilde {\partial N}].$  Then under the homeomorphism $\iota,$ defined in line \eqref{eq iota of manifold}, Subsection \ref{subsec relative C alg}, we have
\[
\iota (L^2(E_*(\widetilde{\partial N})) , b_{\widetilde{ \partial {N}}}, T_{\partial})^G=(PL^2(E_*(\widetilde{N})) , Pb_{\widetilde N}, T_0)^\Gamma.
\]
Thus under the $K$-theory map $\iota^*,$ which is induced by the $C^*$-map
\[\iota: C^*(\widetilde{\partial N})^G\to C^*(\widetilde{N})^{\Gamma},\]
we have
\[
\iota^* \text{Ind}(L^2(E_*(\widetilde{\partial N})) , b_{\widetilde{ \partial {N}}}, T_{\partial})^G=\text{Ind}(PL^2(E_*(\widetilde{N})) , Pb_{\widetilde N}, T_0)^\Gamma\in K_{n}(C^*(\widetilde N)^{\Gamma}).
\]
The right hand side is shown to be trivial in Proposition \ref{prop bord inv of sig of com}.

\section{Relative signature  and mapping relative $L$-theory to $K$-theory}

In this section, we define the relative signature of compact PL manifolds with boundary. We will also prove its homotopy invariance and bordism invariance. At last, by the relative signature, we define the group homomorphism from the relative $L$-theory to the $K$-theory.

In this section, we consider even dimensional compact PL manifolds with boundary only, the odd dimensional case is completely parallel.

\subsection{Relative signature of compact PL manifolds with boundary and its homotopy invariance}

In this subsection, we define the relative signature of compact PL manifolds with boundary, and prove its homotopy invariance.

Let $(N, \partial N)$ be an $n=2k$-dimensional compact PL manifold with boundary, let $\Gamma$ be the fundamental group of $N$ and $G$ be the fundamental group of $\partial N.$ Let $p:\widetilde N\to N$  be the universal covering of $N$, and $\widetilde{\partial N}$ be the universal covering space of $\partial N.$ Let $\widetilde{\partial N}'=p^{-1}\partial N$ be the $\Gamma$-Galois covering space of $\partial N$. Take a triangulation $\text{Tri}(N, \partial N)$ of  $(N,\partial N).$ As the construction in the end of Subsection \ref{subsec sig of mani}, one can lift $\text{Tri}(N, \partial N)$ up to $(\widetilde N, \widetilde{\partial N}')$ as a $\Gamma$-equivariant triangulation $\text{Tri}(\widetilde N, \widetilde{\partial N}')^{\Gamma}$, lift the restriction of  $\text{Tri}(N, \partial N)$ on $\partial N$ up to $\widetilde{\partial N}$ as a $G$-equivariant triangulation $\text{Tri}( \widetilde{\partial N})^G.$ Then we obtain a $\Gamma$-equivariant analytically controlled Hilbert-Poincar\'e pair over $\widetilde N,$
\[
(L^2(E_*(\widetilde{N})) , b_{\widetilde N}, T, P)^\Gamma
\]
and a  $G$-equivariant analytically controlled Hilbert-Poincar\'e complex over $\widetilde {\partial N}$,
\[(L^2(E_*(\widetilde{\partial N})) , b_{\widetilde{ \partial {N}}}, T_{\partial})^G\]
 such that
\[
\iota (L^2(E_*(\widetilde{\partial N})) , b_{\widetilde{ \partial {N}}}, T_{\partial})^G=(PL^2(E_*(\widetilde{N})) , Pb_{\widetilde N}, T_0,)^\Gamma.
\]

 Let
\[\frac{B+S}{B-S}\]
be the representative of the signature of
\[(L^2(E_*(\widetilde{\partial N})) , b_{\widetilde{ \partial {N}}}, T_{\partial})^G\]
defined in Theorem \ref{def signature of hp complex},
and
\[
\frac{B_P+S_P}{B_P-S_P}
\]
be the path  defined in line \eqref{eq odd sig bord path} , then
\begin{equation}\label{eq 2k rel sig class}
(\begin{pmatrix}
\frac{B+S}{B-S} & 0 \\
0 & I
\end{pmatrix}, \frac{B_P+S_P}{B_P-S_P}
)
\end{equation}
defines an invertible element in $C^*_\iota.$ Recall that $[v]$ is the generator class of $K_1(C(S^1)),$ then
\begin{equation}
[(\begin{pmatrix}
\frac{B+S}{B-S} & 0 \\
0 & I
\end{pmatrix}, \frac{B_P+S_P}{B_P-S_P}
)]\otimes [v]
\end{equation}
defines a class in $K_{n}(C^*(N, \partial N)^{\Gamma, G}).$

\begin{theorem}\label{theo define relative sig class}
The class
\[
[(\begin{pmatrix}
\frac{B+S}{B-S} & 0 \\
0 & I
\end{pmatrix}, \frac{B_P+S_P}{B_P-S_P}
)]\otimes [v]\in K_{0}(C^*(N, \partial N)^{\Gamma, G})
\]
 we defined above is independent of the choice of the triangulation. We call this class the relative signature of $(N,\partial N),$ and denote it by
\[
\text{relInd}(N, \partial N).
\]
\end{theorem}

\begin{proof}
Let $\text{Tri}'(N, \partial N)$ and $\text{Tri}''(N,\partial N)$ be two triangulations, then their corresponding $\Gamma$-equivariant analytically controlled Hilbert-Poincar\'e pair over $\widetilde N$ are
\[
(L^2(E_*(\widetilde{N})') , b'_{\widetilde N}, T', P')^\Gamma
\]
and
\[
(L^2(E_*(\widetilde{N})'') , b''_{\widetilde N}, T'', P'')^\Gamma
\]
respectively, and their corresponding $G$-equivariant analytically controlled Hilbert-Poincar\'e complex over $\widetilde{\partial N}$ are
\[(L^2(E_*(\widetilde{\partial N}))' , b'_{\widetilde{ \partial {N}}}, T'_{\partial})^G\]
and
\[(L^2(E_*(\widetilde{\partial N}))'' , b''_{\widetilde{ \partial {N}}}, T''_{\partial})^G\]
respectively.

Let $f:(E_*(\widetilde{N})', b')\to (E_*(\widetilde{N})'', b'')$ be the homotopy equivalence between these two simplicial chain complexes, note that
\[
P'' fP' = fP'.
\]
Thus $f$ induces  the  analytically controlled homotopy equivalence
\[
f: (L^2(E_*(\widetilde{\partial N}))', b'_{\widetilde{ \partial {N}}}, T'_{\partial})^G\to (L^2(E_*(\widetilde{\partial N}))'' , b''_{\widetilde{ \partial {N}}}, T''_{\partial})^G.
\]
The following is also  an analytically controlled homotopy equivalence induced by $f$,
\[
f: (\overline{L^2(E_*(\widetilde{N})')},\overline{b'}_{\widetilde N}(\lambda),\overline{T'}(s))^{\Gamma}\to (\overline{L^2(E_*(\widetilde{N})'')},\overline{b''}_{\widetilde N}(\lambda),\overline{T''}(s))^\Gamma,
\]
where the above complexes are defined in line \eqref{eq sig bord complex path}, and
\[\lambda\in [-1, 0], s\in [0,1].\]
Then the theorem follows from a verbatim application of the construction in Subsection \ref{subsec homo inv of sig of complex}.
\end{proof}

By the same reason, we have
\begin{theorem}\label{thm homo inv of rel sig}
The signature of $n=2k$-dimensional compact PL manifolds  with boundary defined in Theorem \ref{theo define relative sig class}  is a homotopy invariant. That is, let
\[
f: (M,\partial M)\to (N, \partial N)
\]
be a homotopy equivalence of compact PL manifolds with boundary, and $\Gamma$ be the fundamental group of $N,$ $G$ be the fundamental group of $\partial N,$ then
\[
\text{relInd}(M,\partial M)=\text{relInd}(N,\partial N)\in K_{0}(C^*(\widetilde N, \widetilde{\partial N})^{\Gamma, G}).
\]
\end{theorem}

\subsection{Controlled Hilbert-Poincar\'e triple}\label{subsec hil poin triple}

In this subsection, we introduce the notion of the $G$-equivariant analytically controlled Hilbert-Poincar\'e triple, which will be used  to  formulate and prove the bordism invariance of the relative signature of compact  PL manifolds with boundary.

In this subsection, let $X$ be a proper metric space and $G$ be a discrete group acting on $X$ freely, cocompactly, and properly.
	
	\begin{definition}\label{def Hilbert Poincare 2-ads}
		 An $(n+2)$-dimensional $G$-equivariant analytically controlled Hilbert-Poincar\'e triple over  $X$ consists of an $n+2$-dimensional $G$-equivariant analytically controlled Hilbert complex $(H_{*,X}, b)^G$ over $X$, a $G$-equivariant analytically controlled maps $T: H_{*,X}\to H_{n+2-*,X}$ and $G$-equivariant analytically  controlled projections $P_\pm H_{*, X}\to H_{*,X}$ such that
		\begin{enumerate}
			\item $P_\pm bP_\pm =bP_\pm$.
			\item $P_\vee=P_+\vee P_-$, and  $(H_{*,X}, b, T, P_\vee)^G$ is an $(n+2)$-dimensional $G$-equivariant analytically controlled Hilbert-Poincar\'e pair. Set $(P_\vee H_{*,X}, P_\vee b, T_0)^G$ as its boundary complex.
			\item $P_\wedge=P_+\wedge P_-$, and
			$ (P_\pm  H_{*,X}, P_\pm P b, P_\pm T_0 P_\pm, P_\wedge)^G $ are $(n+1)$-dimensional $G$-equivariant analytically controlled Hilbert-Poincar\'e pairs, and their boundary complexes  are $G$-equivariant analytically controlled homotopy equivalence to each other.
			\item $P^\perp_\mp T P^\perp_\pm : (P_\pm^\perp H_{*,X}, P_\pm^\perp b)\to (P_\mp^\perp H_{*,X}, P_\mp^\perp b)$ are  $G$-equivariant analytically controlled homotopy equivalence of complexes.
		\end{enumerate}
In the following, we shall denote a  $G$-equivariant analytically controlled Hilbert-Poincar\'e triple over $X$ consists of elements defined above as
\[
(H_{*,X}, b, T, P_\pm)^G.
\]
		
	\end{definition}
	
	\begin{remark}\label{remark for dual of b and projection}
		Note that in general, $P_\pm b^*P_\pm \neq b^*P_\pm $, however, there is $P^\perp_\pm b^*P^\perp_\pm =b^*P^\perp_\pm$. In fact, decompose $H_{*,X}$ as $P_\pm H_{*,X}\oplus P^\perp_\pm H_{*,X},$ then $P_\pm bP_\pm =bP_\pm$ implies that
		\[
		b=\left(
		\begin{array}{cc}
		b_{11} & b_{12}\\
		0   & b_{22}
		\end{array}
		\right),
		\]
		thus we have
		\[
		b^*= \left(
		\begin{array}{cc}
		b^*_{11} & 0\\
		b^*_{12}   & b^*_{22}
		\end{array}
		\right).
		\]
	\end{remark}

	\begin{lemma}\label{lem 4 complex positive}	
Let
\[
(H_{*,X}, b, T, P_\pm)^G
\]
be an $n+2$-dimensional $G$-equivariant analytically controlled Hilbert-Poincar\'e triple over $X$.
Set
		\begin{eqnarray*}
			 \widehat{H}_{+,*,X} &=& H_{*, X}\oplus P_+^\perp H_{*+1,X} \oplus P_-^\perp H_{*+1, X}\oplus P_\vee^\perp H_{*+2,X},  *=0,1,\cdots n, \\
			\widehat{b}_+(\lambda,\mu)&=&\left(
			\begin{array}{cccc}
				b &0 & 0&0 \\
				\mu P_+^\perp   &   -P_+^\perp  b& 0&0 \\
				\lambda P_-^\perp  & 0 & -P_-^\perp b &0 \\
				0	& \lambda P_\vee^\perp & -\mu P_\vee^\perp &  P_\vee^\perp b
			\end{array}
			\right)
\end{eqnarray*}
and \[ \widehat{T}_{+}(s)  =   \left(
			\begin{array}{cccc}
				0&0 & 0&  e^{-i\pi s} T P_\vee^\perp   \\
				0& 0&  (-1)^{p-1}e^{-i\pi s}P_+^\perp T  P_-^\perp  & 0\\
				0&  (-1)^{p}e^{i\pi s}P_-^\perp T P_+^\perp &0 & 0\\
				e^{i\pi s}P_\vee^\perp T& 0& 0& 0
			\end{array}
			\right)
		\]
on $ \widehat{H}_{+,p,X}.$
Then $(\widehat{H}_{+,*,X}, \widehat{b}_+(\lambda, \mu),  \widehat{T}_{+}(s))^G$
 defines an $n$-dimensional $G$-equivariant analytically controlled Hilbert-Poincar\'e complex over $X$ as long as
		\begin{enumerate}
			\item $ \lambda,\mu\in[-1,0], s \in [0,1]$.
			\item $\lambda s=0.$
%\item $\lambda=0$ if $\mu =0$.
		\end{enumerate}
	\end{lemma}	
	
	\begin{proof}
		By direct computation, one can see that $(\widehat{H}_{+,*,X}, \hb_+(\lambda, \mu))^G$ is a $G$-equivatiant analytically controlled Hilbert complex over $X$. Thus it is sufficient to show that $\hT_+(s)$ are controlled Hilbert-Poincar\'e dualities when it is satisfied that
	\begin{enumerate}
			\item $ \lambda,\mu\in[-1,0], s \in [0,1]$.
			\item $\lambda s=0$.
%\item $\lambda=0$ if $\mu =0$.
		\end{enumerate}
We focus on $s=0$ case first.

We claim that $\hT_+^*(0)= (-1)^{(n-p)p}\hT_+(0)$.In fact, for $\widehat{T}_{+}(0)$ on $\widehat{H}_{+,p,X},$ we have
\begin{eqnarray*}
\widehat{T}_{+}^*(0)&=& \left(
			\begin{array}{cccc}
				0&0 & 0& T P_\vee^\perp   \\
				0& 0&  (-1)^{p-1}P_+^\perp T  P_-^\perp  & 0\\
				0&  (-1)^{p}P_-^\perp T P_+^\perp &0 & 0\\
				P_\vee^\perp T& 0& 0& 0
			\end{array}
			\right)^*\\
&=&  \left(\begin{array}{cccc}
				0&0 & 0& T^* P_\vee^\perp   \\
				0& 0&  (-1)^{p}P_+^\perp T^*  P_-^\perp  & 0\\
				0&  (-1)^{p-1}P_-^\perp T^* P_+^\perp &0 & 0\\
				P_\vee^\perp T^*& 0& 0& 0
			\end{array}
			\right).
\end{eqnarray*}
Now the claim follows from
\begin{eqnarray*}
 T^* P_\vee^\perp&=&   (-1)^{(n-p)p}T P_\vee^\perp: H_{n+2-p, X}\mapsto  H_{p, X} ,\\
P_+^\perp T^*  P_-^\perp &=& (-1)^{(n+1-p)(p+1)}P_+^\perp T P_-^\perp:  H_{n+1-p, X} \mapsto  H_{p+1, X},\\
P_-^\perp T^* P_+^\perp &=& (-1)^{(n+1-p)(p+1)}P_-^\perp T P_+^\perp:  H_{n+1-p, X} \mapsto  H_{p+1, X},\\
P_\vee^\perp T^* &=&  (-1)^{(n-p)p} P_\vee^\perp T:  H_{n-p, X} \mapsto  H_{p+2, X}
\end{eqnarray*}

Then we need to show that
$$(-1)^p\hb_+(\lambda,\mu)\hT_+(0) +\hT_+(0)\hb^*_+(\lambda,\mu)=0.$$ Set
\[
V_p=v_1\oplus v_2\oplus v_3\oplus v_4\in \widehat{H}_{+,p,X},
\]
we have
\begin{eqnarray*}
&&\hb_+(\lambda,\mu)\hT_+(0) V_p\\
&=&\hb_+(\lambda,\mu) \begin{pmatrix}
				0&0 & 0&   T P_\vee^\perp   \\
				0& 0&  (-1)^{p-1}P_+^\perp T  P_-^\perp  & 0\\
				0&  (-1)^{p}P_-^\perp T P_+^\perp &0 & 0\\
				P_\vee^\perp T& 0& 0& 0
			\end{pmatrix}
\begin{pmatrix}
v_1\\
v_2\\
v_3\\
v_4
\end{pmatrix}\\
&=& \begin{pmatrix}
				0&0                   &  0                                & b T P_\vee^\perp   \\
				0& 0                  &  (-1)^{p}P_+^\perp bT  P_-^\perp  & \mu P_+^\perp TP_\vee^\perp \\
				0&  (-1)^{p+1}P_-^\perp b T P_+^\perp &0    & \lambda P_-^\perp TP_\vee^\perp\\
				P_\vee^\perp bT& (-1)^{p+1}\mu  P_\vee^\perp TP_+^\perp                                & (-1)^{p+1}\lambda P_\vee^\perp TP_-^\perp                      & 0
			\end{pmatrix}
\begin{pmatrix}
v_1\\
v_2\\
v_3\\
v_4
\end{pmatrix},
\end{eqnarray*}
and
\begin{eqnarray*}
&&\hT_+(0)\hb_+(\lambda,\mu)V_p \\
&=& \begin{pmatrix}
				0&0 & 0&   T P_\vee^\perp   \\
				0& 0&  (-1)^{p-1}P_+^\perp T  P_-^\perp  & 0\\
				0&  (-1)^{p}P_-^\perp T P_+^\perp &0 & 0\\
				P_\vee^\perp T& 0& 0& 0
			\end{pmatrix}\hb_+(\lambda,\mu)V_p\\
&=& \begin{pmatrix}
				0&0                   &  0                                & Tb^* P_\vee^\perp   \\
				0& 0                  &  (-1)^{p-1}P_+^\perp Tb^*  P_-^\perp  & (-1)^{p-1}\mu P_+^\perp TP_\vee^\perp \\
				0&  (-1)^{p}P_-^\perp Tb^* P_+^\perp &0    & (-1)^{p-1}\lambda P_-^\perp TP_\vee^\perp\\
				P_\vee^\perp Tb^*& \mu  P_\vee^\perp TP_+^\perp                                & \lambda P_\vee^\perp TP_-^\perp                      & 0
			\end{pmatrix}
\begin{pmatrix}
v_1\\
v_2\\
v_3\\
v_4
\end{pmatrix}.
\end{eqnarray*}
Now the equality
$$(-1)^p\hb_+(\lambda,\mu)\hT_+(0) +\hT_+(0)\hb^*_+(\lambda,\mu)=0$$
follows.

		 At last, we show that $\hT_+(0) $ is a homotopy equivalence. In fact, we decompose $\hH_{+,*,X}$ as
$\mathcal{H}_{1,*}\oplus \mathcal{H}_{2,*}$, where
		\[\mathcal{H}_{1,*}=  H_{*,X}\oplus P_+^\perp H_{*+1,X},\ \  \mathcal{H}_{2,*} = P_-^\perp H_{*+1,X}\oplus P_\vee^\perp H_{*+2,X}. \]
		Set
		\[b_1=\left(
		\begin{array}{cc}
		b_{\widetilde{X}} &0  \\
		\mu P_+^\perp   &   -P_+^\perp  b_{\widetilde{X}}  \\
		\end{array}
		\right), \ \
		b_2=
		\left(
		\begin{array}{cc}
		-P_-^\perp b_{\widetilde{X}} &0 \\
		-\mu P_\vee^\perp &  P_\vee^\perp b_{\widetilde{X}}
		\end{array}
		\right).
		\]
		Set
		\[
		T_1= \left(
		\begin{array}{cc}
		0&  (-1)^{p}P_-^\perp T P_+^\perp \\
		P_\vee^\perp T & 0
		\end{array}
		\right),\ \
        T_2 = \left(
		\begin{array}{cc}
		0&  T  P_\vee^\perp \\
		 (-1)^{p-1} P_+^\perp T  P_-^\perp  & 0\\
		\end{array}
		\right).
		\]
		It is direct to see that we have
		\[
		\xymatrix{
			0 \ar[r] & (\mathcal{H}_{1,*}, b_1)^G \ar[r] \ar[d]^{T_1} &  (\hH_{+,*,X}, b_{\lambda, \mu})^G\ar[r] \ar[d]^{\hT_+(0) } & (\mathcal{H}_{2,*}, b_2)^G \ar[r]\ar[d]^{T_2} & 0\\
			0 \ar[r] & (\mathcal{H}^*_{1,*}, b^*_2)^G \ar[r] & (\hH^*_{+,*,X}, b^*_{\lambda, \mu})^G \ar[r] &  (\mathcal{H}^*_{1,*}, b^*_1)^G \ar[r] & 0.
		}
		\]
		By basic topology theory, we know that $T_1:(\mathcal{H}_{1,*}, b_1) \to  (\mathcal{H}^*_{2,*}, b^*_2) $ and $T_2:(\mathcal{H}_{2,*}, b_2)\to  (\mathcal{H}^*_{1,*}, b^*_1) $ are both $G$-equivariant analytically controlled chain homotopy equivalences, so be
		$\hT_+(0) $ by Lemma 4.2 of \cite{HR051}.

The $s\neq 0$ case follows from almost the same but much simpler computation. The proof is then completed.
	\end{proof}
	%\begin{remark}
	%Intuitively, $T'_{s, +,\widetilde{X}}$ should has the form
	%\[
	% \left(
	%    	\begin{array}{cccc}
	%    		0&0 & 0&   P_\vee^\perp T   \\
	%    		0& 0&  (-1)^{p}P_-^\perp T  P_+^\perp  & 0\\
	%    		0&  (-1)^{p} P_+^\perp T P_-^\perp &0 & 0\\
	%    		(-1)^{p} (-1)^{p+1} TP_\vee^\perp & 0& 0& 0
	%    	\end{array}
	%    	\right),
	%\]
	%however, the orientation of $\partial_\pm M$ inherit from $M$ reverse to each other.
	%\end{remark}
	In the same reason, we have

\begin{lemma}\label{lem 4 complex negative}
Let
\[
(H_{*,X}, b, T, P_\pm)^G
\]
be a $n+2$-dimensional $G$-equivariant analytically controlled Hilbert-Poincar\'e triple over $X$.
Set
		\begin{eqnarray*}
			 \widehat{H}_{-,*,X} &=& H_{*, X}\oplus P_-^\perp H_{*+1,X} \oplus P_+^\perp H_{*+1, X}\oplus P_\vee^\perp H_{*+2,X},  *=0,1,\cdots n,\\
			\widehat{b}_-(\lambda,\mu)&=&\left(
			\begin{array}{cccc}
				b &0 & 0&0 \\
				\mu P_-^\perp   &   -P_-^\perp  b& 0&0 \\
				\lambda P_+^\perp  & 0 & -P_+^\perp b &0 \\
				0	& \lambda P_\vee^\perp & -\mu P_\vee^\perp &  P_\vee^\perp b
			\end{array}
			\right)
\end{eqnarray*}
and \[ \widehat{T}_{-}(s)  =   \left(
			\begin{array}{cccc}
				0&0 & 0& e^{-i\pi s} P_\vee^\perp T   \\
				0& 0&  (-1)^{p-1}e^{-i\pi s}P_-^\perp T  P_+^\perp  & 0\\
				0&  (-1)^{p}e^{i\pi s}P_+^\perp T P_-^\perp &0 & 0\\
				e^{i\pi s}TP_\vee^\perp & 0& 0& 0
			\end{array}
			\right)
		\]
on $ \widehat{H}_{-,p,X}.$ Then $(\widehat{H}_{-,*,X}, \widehat{b}_-(\lambda, \mu),  \widehat{T}_{-}(s))^G$
 defines an $n$-dimensional $G$-equivariant analytically controlled Hilbert-Poincar\'e complex over $X$ as long as
		\begin{enumerate}
			\item $ \lambda,\mu\in[-1,0], s \in [0,1]$.
			\item $\lambda s=0.$
%\item $\lambda=0$ if $\mu =0$.
		\end{enumerate}
	\end{lemma}	
	\begin{proof}
		It is sufficient to prove that
\[
(\widehat{H}_{-,*,X}, \widehat{b}_-(\lambda, \mu),  -\widehat{T}_{-}(s))^G
\]
defines an $n$-dimensional $G$-equivariant analytically controlled Hilbert-Poincar\'e complex over $X$. However,
The lemma follows from Lemma \ref{lem 4 complex positive} and a unitary equivalence between
\[(\widehat{H}_{+,*,X}, \widehat{b}_+(\lambda, \mu),  \widehat{T}_{+}(s))^G\]
and
\[
(\widehat{H}_{-,*,X}, \widehat{b}_-(\mu, \lambda),  -\widehat{T}_{-}(s))^G
\]
induced by
		\[
		\left(
		\begin{array}{cccc}
		1 & 0 & 0 & 0 \\
		0 & 0 & 1 & 0 \\
		0 & 1 & 0 & 0 \\
		0 & 0 & 0 & -1
		\end{array}
		\right).
		\]
	\end{proof}
	
% We mention that  when $(H_{*,X}, b, T,P_\pm)^G$ is $2k$ dimensional,  then the elements will be denoted by
%	\[
%	P_+(B_{\mu,P_+}+ S_{\mu,P_+})- P_+({\mu,B_{P_+}-S_{\mu,P_+}})\in C_0( [1,0), C^*(\widetilde{X})^G)
%	\]
%	and
%	\[
%	P_+(B_{\mu,P_-}+ S_{\mu,P_-})- P_+({B_{\mu,P_-}-S_{\mu,P_-}})\in C_0( [1,0), C^*(\widetilde{X})^G)
%	\]
%	similarly.
	
	\begin{lemma}\label{lem rel homotopy equivalence positive}
		Let
\[
(H_{*,X}, b, T, P_\pm)^G
\]
be an $n+2$-dimensional $G$-equivariant analytically controlled Hilbert-Poincar\'e triple over $X$.
Set		
\begin{eqnarray*}
			\overline{P_+H}_{*,X} &=& P_+H_{*,X} \oplus P_\wedge^\perp P_+ H_{*+1,X}, *=0,\cdots n,\\
			\overline{P_+b}(\lambda)&=& \left(
			\begin{array}{cc}
				P_+ b & 0\\
				\lambda P_\wedge^\perp  & -P_\wedge^\perp P_+ b
			\end{array}
			\right) ,\lambda\in [-1,0] \\
		\end{eqnarray*}
and \[
	\overline{P_+ T_0P_+}(s) =
			\left(
			\begin{array}{cc}
				0 &  e^{is\pi}(P_+ T_0P_+)P_\wedge^\perp    \\
				(-1)^pe^{-is\pi} P_\wedge^\perp (P_+ T_0P_+)   & 0
			\end{array}
			\right), s\in [0,1].
\]
 Then the $n$-dimensional $G$-equivariant analytically controlled Hilbert-Poincar\'e complex over $X$.
\[
 (\overline{P_+H}_{*,X},\overline{P_+b}(\lambda), \overline{P_+ T_0P_+}(s))^G ,\lambda s=0,
\]
 is $G$-equivariantly homotopy equivalent to the complex
\[(\widehat{H}_{+,*,X}, \widehat{b}_+(\lambda, -1),  \widehat{T}_{+}(s))^G,\lambda s=0\]
defined in Lemma \ref{lem 4 complex positive},
under the the controlled chain map
		\begin{eqnarray*}
			A:\overline{P_+H}_{*,X} &\to &      	\widehat{H}_{+,*,X}\\
			(v, w) &\to & (v, 0, w, 0)
		\end{eqnarray*}
	\end{lemma}
	\begin{proof}
		By basic facts about mapping cone complex, one can see that
\[
A:  (\overline{P_+H}_{*,X},\overline{P_+b}(\lambda))^G\to (\widehat{H}_{+,*,X}, \widehat{b}_+(\lambda, -1))^G
\]
 is a $G$-equivariant analytically controlled  homotopy equivalence. It remains to show that $A \overline{P_+ T_0P_+}(s)A^* $ and  $ \widehat{T}_{+}(s)$ are $G$-equivariant analytically controlled homotopy equivalent to each other. However, this can be seen by simply verifying
		\[
		A\overline{P_+ T_0P_+}(s) A^*-\widehat{T}_{+}(s) = h_{p+1}   \widehat{b}_+(\lambda, -1) + (-1)^p\widehat{b}_+(\lambda, -1) h_p,
		\]
		where the operator $h_p $ is an analytically controlled operator on $\widehat{H}_{+,p,X},$
		which is defined as
		\[
		\left(
		\begin{array}{cccc}
		0 & 0& P_+ T P_-^\perp & 0 \\
		0 & 0& 0 & 0 \\
		(-1)^pP_-^\perp T P_+ & 0& 0 & 0 \\
		0 & 0& 0 & 0
		\end{array}
		\right).
		\]
%		Note that by Remark \ref{remark for dual of b and projection}, we have
%		\begin{eqnarray*}
%			&&bP_+TP^\perp_- + (-1)^p P_+TP^\perp_- b^*\\
%			&=& P_+bTP^\perp_- + (-1)^p P_+Tb^*P^\perp_- \\
%			&=& P_+T_0 P_+P^\perp_-.
%		\end{eqnarray*}
	\end{proof}
	
	\begin{corollary}\label{cor homotopy factor through positive}
		The boundary complex
\[(P_\wedge (P_+H_{*,X}), P_\wedge(P_+ b), (P_+ T_0P_+)_0)^G\]
of the $G$-equivariant analytically controlled Hilbert-Poincar\'e pair
\[
(P_+H_{*,X},P_+ b, P_+ T_0P_+, P_\wedge )^G,
\]
 is $G$-equivariantly homotopy equivalent to the complex
\[(\widehat{H}_{+,*,X}, \widehat{b}_+(-1, -1),  \widehat{T}_{+}(s))^G,\]
 with the homotopy factors through the $G$-equivariant homotopy equivalence between
\[(P_\wedge (P_+H_{*,X}), P_\wedge(P_+ b), (P_+ T_0P_+)_0)^G\]
and
\[(\overline{P_+H}_{*,X},\overline{P_+b}(-1),  \overline{P_+ T_0P_+}(s))^G.\]
	\end{corollary}
	In the same reason, we have
	\begin{lemma}\label{lem rel homotopy equivalence negative}
		Let
\[
(H_{*,X}, b, T, P_\pm)^G
\]
be an $n+2$-dimensional $G$-equivariant analytically controlled Hilbert-Poincar\'e triple over $X$.
Set		
\begin{eqnarray*}
			\overline{P_-H}_{*,X} &=& P_-H_{*,X} \oplus P_\wedge^\perp P_- H_{*+1,X}, *=0,\cdots n.\\
			\overline{P_-b}(\lambda)&=& \left(
			\begin{array}{cc}
				P_- b & 0\\
				\lambda P_\wedge^\perp  & -P_\wedge^\perp P_- b
			\end{array}
			\right) ,\lambda\in [-1,0] \\
		\end{eqnarray*}
and \[
	\overline{P_- T_0P_-}(s) =
			\left(
			\begin{array}{cc}
				0 &  e^{is\pi}(P_- T_0P_-)P_\wedge^\perp    \\
				(-1)^pe^{-is\pi} P_\wedge^\perp (P_- T_0P_-)   & 0
			\end{array}
			\right), s\in [0,1].
\]
 Then the $n$-dimensional $G$-equivariant analytically controlled Hilbert-Poincar\'e complex over $X$.
\[
 (\overline{P_-H}_{*,X},\overline{P_-b}(\lambda), \overline{P_- T_0P_-}(s))^G ,\lambda s=0,
\]
 is $G$-equivariantly homotopy equivalent to the complex
\[(\widehat{H}_{-,*,X}, \widehat{b}_-(\lambda, -1),  \widehat{T}_{-}(s))^G,\lambda s=0,\]
defined in Lemma \ref{lem 4 complex positive},
under the the controlled chain map
		\begin{eqnarray*}
			A:\overline{P_-H}_{*,X} &\to &      	\widehat{H}_{-,*,X}\\
			(v, w) &\to & (v, 0, w, 0)
		\end{eqnarray*}
	\end{lemma}
	
	\begin{corollary}\label{cor homotopy factor through negative}
		The boundary complex
\[(P_\wedge (P_-H_{*,X}), P_\wedge(P_- b), (P_- T_0P_-)_0)^G\]
of the $G$-equivariant analytically controlled Hilbert-Poincar\'e pair
\[
(P_-H_{*,X},P_- b, P_- T_0P_-, P_\wedge )^G,
\]
 is $G$-equivariantly homotopy equivalent to the complex
\[(\widehat{H}_{-,*,X}, \widehat{b}_-(-1, -1),  \widehat{T}_{-}(s))^G,\]
 with the homotopy factors through the $G$-equivariant homotopy equivalence between
\[(P_\wedge (P_-H_{*,X}), P_\wedge(P_- b), (P_- T_0P_-)_0)^G\]
and
\[(\overline{P_-H}_{*,X},\overline{P_-b}(-1),  \overline{P_- T_0P_-}(s))^G.\]
	\end{corollary}

\subsection{Bordism invariance of the relative signature of compact  PL manifolds with boundary}

In this subsection, we formulate the bordism invariance of the relative signature of compact PL manifolds with boundary, whose  proof is almost immediate due to the preparation in Subsection \ref{subsec hil poin triple}.

Let $(M, \partial_\pm M)$ be an $n+1$-dimensional compact PL manifold 2-ads, with $\pi_1(M)=\Gamma$,  $\pi_1(\partial_\pm M)= \Gamma_\pm$ and $\pi_1(\partial \partial_\pm M)=G$. Let $\widetilde M,$  $\widetilde{\partial_{\pm}M}$ and $\widetilde{\partial\partial_{\pm}M}$ be the universal covering space of $M$, $\partial M,$ $\partial_{\pm} M$ and $\partial \partial_{\pm} M$ be the universal covering space of $M,\  \partial_{\pm} M,\ \partial \partial_{\pm} M $ respectively. Then as in Subsection \ref{subsec relative C alg}, we have relative $C^*$-algebras $C^*(\widetilde{\partial_+M}, \widetilde{\partial \partial_{+} M})^{\Gamma_+, G},$ $C^*(\widetilde{M}, \widetilde{\partial_{-} M})^{\Gamma, \Gamma_-},$
and a  $C^*$-homomorphism
\[
\iota_+: C^*(\widetilde{\partial_+M}, \widetilde{\partial \partial_{+} M})^{\Gamma_+, G}\to C^*(\widetilde{M}, \widetilde{\partial_{-} M})^{\Gamma, \Gamma_-}.
\]

\begin{theorem}\label{theo bordism invariance of relative signature}
		Let $(M, \partial_\pm M)$ be an $n+1$-dimensional compact PL manifold 2-ads, with $\pi_1(M)=\Gamma$, $\pi_1(\partial_\pm M)= \Gamma_\pm$ and $\pi_1(\partial \partial_\pm M)=G$. Let $\iota_+: (\partial_+ M, \partial \partial_+ M )\to (M, \partial_- M)$ be the embedding of the positive part of the boundary. Then we have
		\[
		\iota_+^* ({\rm relInd}(\partial_+M, \partial \partial_+ M ) )=0\in K_{n}(C^*(\widetilde{M}, \widetilde{\partial_- M})^{\Gamma, \Gamma_-} ).
		\]
	\end{theorem}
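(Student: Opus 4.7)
The plan is to represent $i_+^*({\rm relInd}(\partial_+ M, \partial\partial_+ M))$ through the triangulation-induced geometrically controlled Hilbert-Poincaré complex $2$-ads on $\widetilde{M}$ and trivialize it using the families of complexes built in Section~\ref{subsec geometrically controlled Hilbert Poincar\'e 2 ads}. Choose a simplicial triangulation of bounded geometry on $(M, \partial_+ M, \partial_- M)$ that restricts compatibly to each face, and lift it to $\widetilde{M}$. This yields a $G$-equivariant Hilbert-Poincaré $2$-ads $(E, b, T, P_+, P_-)_{\widetilde{M}}$ in the sense of Definition~\ref{def Hilbert Poincar\'e 2-ads}. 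By the axioms, $(P_+ E, P_+ b, P_+ T_0 P_+, P_\wedge)_{\widetilde{M}}$ is the controlled Hilbert-Poincaré pair carrying the data of $(\partial_+ M, \partial\partial_+ M)$ pushed forward along $i_+$, and analogously $(P_- E, P_- b, P_- T_0 P_-, P_\wedge)_{\widetilde{M}}$ carries $(\partial_- M, \partial\partial_- M)$.

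Unwinding Definition~\ref{def relative signature}, the pushforward $i_+^*({\rm relInd}(\partial_+ M, \partial\partial_+ M))$ is represented by a class of the form $(a, f) \otimes [v]$, where $a \in C^*(\widetilde{\partial_- M})^{\Gamma_-}$ carries the signature class of the corner $\partial\partial_+ M = \partial\partial_- M$, and $f \in C_0([0,1), C^*(\widetilde{M})^G)$ is the bordism-invariance path produced from the pair $(P_+ E, P_+ b, P_+ T_0 P_+, P_\wedge)_{\widetilde{M}}$ via the construction of Section~\ref{subsec signature class as bordism invariant}, with $f(0) = i_-(a)$. Applying Corollary~\ref{cor homotopy factor through positive} identifies $f$ up to homotopy with the signature path of $(\widetilde{E}, b_{\lambda, -1}, T'_s)_{+, \widetilde{M}}$. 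Varying $\mu$ from $-1$ to $0$ inside the family of Lemma~\ref{lem 4 complex positive}, subject to $\lambda s = 0$ and $\lambda = 0$ when $\mu = 0$, deforms $f$ to a trivial path: at $\mu = 0$ the constraint forces $\lambda = 0$, and the resulting complex decouples into a direct sum of a complex with its dual, whose signature is zero. In parallel, the pair $(P_- E, P_- b, P_- T_0 P_-, P_\wedge)_{\widetilde{M}}$ provides, via Theorem~\ref{bord inv of K homo sig}, a trivialization of $a$ inside $K_*(C^*(\widetilde{\partial_- M})^{\Gamma_-})$.

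The main obstacle I expect is assembling these two deformations into a single continuous homotopy of pairs inside $C_{i_-}$: the matching condition $f_u(0) = i_-(a_u)$ must be preserved throughout the homotopy parameter $u$, not merely at the endpoints. The purpose of the careful parameter constraints $\lambda s = 0$ and $\lambda = 0$ when $\mu = 0$ in Lemma~\ref{lem 4 complex positive}, together with the fact that the shared corner $\partial\partial_+ M = \partial\partial_- M$ appears consistently in both the $P_+$- and $P_-$-pair structures of the $2$-ads (so that the $P_\wedge$-boundary of each witnesses the same controlled complex), is precisely to enforce this coherence. Once this coherence is verified, the assembled homotopy exhibits $(a, f) \otimes [v]$ as the trivial class in $K_{n-1}(C^*(\widetilde{M}, \widetilde{\partial_- M})^{G, \Gamma_-})$, completing the proof.
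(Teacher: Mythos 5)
Your setup (the triangulation-induced $2$-ads, Corollary \ref{cor homotopy factor through positive}, and the family of Lemma \ref{lem 4 complex positive}) matches the paper's, but the final step has a genuine gap, and it is not the one the paper's proof actually confronts. A class in $K_{n-1}(C_{i_-})$ is a pair $(a,f)$ with $f(0)=i_-(a)$, and you propose to kill $f$ by the $\mu$-deformation of Lemma \ref{lem 4 complex positive} while \emph{separately} killing $a$ via Theorem \ref{bord inv of K homo sig} applied to the $P_-$-pair. Individual trivializability of the two components does not imply triviality of the mapping-cone class; one needs a single homotopy of pairs respecting $f_u(0)=i_-(a_u)$ for all $u$, and you acknowledge this but only assert that the constraints $\lambda s=0$ and ``$\lambda=0$ when $\mu=0$'' enforce it. They do not: those constraints exist solely so that each member of the family is an honest Hilbert--Poincar\'e complex (the rotation in $s$ may only be performed once the cone parameter $\lambda$ has been contracted to $0$, as in the standard Higson--Roe bordism argument); they say nothing about matching the deformation of $a$ in $C^*(\widetilde{\partial_- M})^{\Gamma_-}$ with the deformation of $f$ in $C^*(\widetilde{M})^G$.

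The paper's proof avoids this compatibility problem entirely by a different closing move: it shows
\[
i_+^*\bigl({\rm relInd}(\partial_+M,\partial\partial_+M)\bigr)=i_-^*\bigl({\rm relInd}(\partial_-M,\partial\partial_-M)^{-1}\bigr),
\]
using Lemmas \ref{lem rel homotopy equivalence positive} and \ref{lem rel homotopy equivalence negative} together with Corollaries \ref{cor homotopy factor through positive} and \ref{cor homotopy factor through negtative} to put both sides in the forms $(\,\cdot\,,f_+)$ and $(\,\cdot\,,f_-)$, and then conjugating by the explicit permutation unitary $U$ that exchanges the $P_+^\perp E_{p+1}$ and $P_-^\perp E_{p+1}$ summands, which carries one representative to the other. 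Since $i_-^*$ pushes a class supported entirely on $\partial_- M$ into the mapping cone of $i_-:C^*(\widetilde{\partial_- M})^{\Gamma_-}\to C^*(\widetilde{M})^G$, where it factors through a contractible cone and is zero by construction of the relative algebra, the theorem follows. To repair your argument you would either need to adopt this $U$-conjugation step, or genuinely construct the two-parameter family of $2$-ads complexes interpolating between the corner trivialization and the bulk trivialization; the latter is not supplied by the lemmas as stated.
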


%Let $(H_{*,X}, b, T,P_\pm)^\Gamma$ be a $2k+1$-dimensional $G$-equivariant analytically controlled Hilbert-Poincar\'e triple. Then we can define an element in $[1,0)\times C^*(\widetilde{X})^G$  by considering the signature es of the  family of $2k-1$ dimensional $G$-equivariant analytically controlled Hilbert-Poincar\'e complexes
%	\[
%	\left\{
%	\begin{array}{cc}
%	(\hH_{+,*,X}, \hb_+(-(1-2t), \mu),  \hT_{+}(0))^G & t\in[0, \frac{1}{2}]\\
%	(\hH_{+,*,X}, \hb_+(0, \mu) , \hT_{+}(2t-1))^G & t\in[\frac{1}{2}, 1]
%	\end{array}
%	\right.
%	\]
%	and
%	\[
%\left\{
%	\begin{array}{cc}
%	(\hH_{-,*,X}, \hb_-(-(1-2t), \mu),  \hT_{-}(0))^G & t\in[0, \frac{1}{2}]\\
%	(\hH_{-,*,X}, \hb_-(0, \mu) , \hT_{-}(2t-1))^G  & t\in[\frac{1}{2}, 1]
%	\end{array}
%	\right.
%	\]
%     which  will be  denoted by
%	\[
%	\frac{B_{\mu,P_+}+ S_{\mu,P_+}}{B_{\mu,P_+}-S_{\mu,P_+}}\in C_0( [1,0), C^*(\widetilde{X})^G)
%	\]
%	and
%	\[
%	\frac{B_{\mu,P_-}+ S_{\mu,P_-}}{B_{\mu,P_-}-S_{\mu,P_-}}\in C_0( [1,0), C^*(\widetilde{X})^G)
%	\]
%	respectively.
	
	\begin{proof}
Set
\[
\iota_-: C^*(\widetilde{\partial_-M}, \widetilde{\partial \partial_{-} M})^{\Gamma_-, G}\to C^*(\widetilde{M}, \widetilde{\partial_{-} M})^{\Gamma, \Gamma_-}.
\]
Note that by the definition of the relative $C^*$-algebras, we have
\[
\iota^*_-(K_{n}(C^*(\widetilde{\partial_-M}, \widetilde{\partial \partial_{-} M})^{\Gamma_-, G}))=\{0\},
\]
thus
\[
\iota^*_-({\rm relInd}(\partial_-M, \partial \partial_- M ) )=0\in K_n(C^*(\widetilde{M}, \widetilde{\partial_{-} M})^{\Gamma, \Gamma_-}).
\]
Hence it is sufficient to show that
\[
\iota_+^* ({\rm relInd}(\partial_+M, \partial \partial_+ M ) )=-\iota^*_-({\rm relInd}(\partial_-M, \partial \partial_- M ) ).
\]
%In the following,  We will go through the detail for $n=2k+1$ case only. The $n=2k$ case is totally the same.

Let $\text{Tri}(M, \partial_{\pm} M)$ be a triangulation of $(M, \partial_{\pm} M)$, then it induces a $\Gamma$-equivariant analytically controlled Hilbert complex over $\widetilde M,$ denoted as $(H_{*,\tM}, b)^\Gamma.$  Let $T$ be its Poincar\'e duality, and $P_{\pm}$ be the usual projections on to the subspace of $H_{*,X}$ spanned by complex on $\partial_{\pm} M$ respectively. Then
\[
(H_{*,\tM}, b_{\widetilde M}, T, P_{\pm})^\Gamma
\]
is a $\Gamma$-equivariant analytically controlled Hilbert-Poincar\'e triple over $\widetilde M.$ Parallelly, we have
\[
(H_{*, \widetilde{\partial_\pm M}}, b_{\widetilde{\partial_\pm M}}, T_{\partial_\pm}, P_{\pm})^{\Gamma_{\pm}},
\]
the $\Gamma_\pm$-equivariant analytically controlled Hilbert-Poincar\'e pairs over $\widetilde{\partial_{\pm} M},$ and
\[
(H_{*,{\widetilde{\partial \partial_{\pm}M }}}, b_{\widetilde{\partial\partial_{\pm} M}}, T_{\partial\partial_{\pm}})^{G},
\]
the $G$-equivariant analytically controlled Hilbert-Poincar\'e complexes over $\widetilde{\partial \partial_{\pm} M}.$

Set
\[[(a_+,f_+)]=    {\rm relInd}(\partial_+M, \partial \partial_+ M ),\ [(a_-,f_-)]=    {\rm relInd}(\partial_-M, \partial \partial_- M ).\]
Then $a_\pm$ are the representatives of the signatures of
\[
(H_{*,{\widetilde{\partial \partial_{\pm} M}}}, b_{\widetilde{\partial\partial_{\pm} M}}, T_{\partial\partial_{\pm}})^{G}
\]
defined in Theorem \ref{def signature of hp complex}. Note that $\iota_-(a_-)$
equals the representative of the signature of the complex defined in line \ref{eq rel homology complex}
\[
(\bH_{*,{\widetilde{\partial_{-} M}}}, \bb_{\widetilde{\partial_{-}M}}, T_{\partial\partial_{-},0})^{\Gamma_-},
\]
and due to the orientation being opposite, $
\iota_+(a_+)$
equals the element representing the signature of the complex defined in line \ref{eq rel homology complex}
\[
(\bH_{*,{\widetilde{\partial_{-} M}}}, \bb_{\widetilde{\partial_{-}M}}, -T_{\partial\partial_{-},0})^{\Gamma_-}.
\]

On the other hand,
\[
\iota_\pm (f_\pm)
\]
equals the path, as defined in line \ref{eq odd sig bord path}, derived from the complex
\[(\widehat{H}_{\pm,*,\widetilde M}, \widehat{b}_{\widetilde M,\pm}(\lambda,-1), \widehat{T}_{\pm}(s))^\Gamma, \lambda\in [-1,0], s\in [0,1], \lambda s=0.\]
Now the fact that
\[
\iota_+ (f_+)=-\iota_- (f_-)
\]
follows from Lemma \ref{lem rel homotopy equivalence positive}, Corollary \ref{cor homotopy factor through positive}, Lemma \ref{lem rel homotopy equivalence negative}, Corollary \ref{cor homotopy factor through negative}, and the fact that the complex
\[
(\widehat{H}_{+,*,\widetilde M}, \widehat{b}_{\widetilde M,+}(\lambda, \lambda),  \widehat{T}_{+}(s))^\Gamma, \lambda s=0
\]
differs from the complex
\[
(\widehat{H}_{-,*,\widetilde M}, \widehat{b}_{\widetilde M,-}(\lambda, \lambda),  -\widehat{T}_{-}(s))^\Gamma
, \lambda s=0\]
by the conjugation of the unitary
\[
		U= \left(
		\begin{array}{cccc}
		1 & 0 & 0 & 0\\
		0 & 0 & 1 & 0\\
		0 & 1 & 0 & 0\\
		0 & 0 & 0 & -1
		\end{array}
		\right),
		\]
		This finishes our proof.
	\end{proof}

\subsection{Group homomorphism from the relative $L$-theory to the $K$-theory}

In this section, we show that the relative signature of compact PL manifolds with boundary induces an additive map from the relative $L$-theory to the $K$-theory.

Let  $(X, \partial X)$ be an $n$-dimensional compact PL manifold with boundary. Set $\Gamma=\pi_1(X),$ $G=\pi_1(\partial X).$ Let
\[
\theta =(M, \partial_{\pm }M,\phi,N, \partial_{\pm} N, \psi, f)
\]
be an element in $L_n(\pi_1(X),\pi_1(\partial X)).$ Then let
\[
(M\cup_f -N,\partial_-M\cup_f -\partial_- N)
\]
be the space obtained by glueing $(M, \partial_{\pm} M)$ and $(-N, -\partial_{\pm}N)$ by the homotopy equivalence $f$. Although
\[
(M\cup_f -N,\partial_-M\cup_f -\partial_- N)
\]
is not a compact $PL$ manifold with boundary in general, one can still consider the  Poincar\'e duality operator induced by the cap product with the fundamental class $[M\cup_f -N],$ and projections onto
\[
\partial_-M\cup_f -\partial_- N.
\]
Thus it  makes sense to consider the relative signature
\[
\text{relInd}(M\cup_f -N,\partial_-M\cup_f -\partial_- N)\in K_n(C^*(\widetilde X, \widetilde{\partial X})^{\Gamma,G}).
\]

\begin{definition}
For each element
\[\theta =(M, \partial_{\pm }M,\phi,N, \partial_{\pm} N, \psi, f),\]
define
\[
\text{relInd}(\theta)=\text{relInd}(M\cup_f -N,\partial_-M\cup_f -\partial_- N)\in K_n(C^*(\widetilde X, \widetilde{\partial X})^{\Gamma,G}).
\]
\end{definition}

By the bordism invariance of the relative signature, and the fact that it is additive on disjoint unions, we have the following theorem.
\begin{theorem}\label{add map from L to K}
The map
\[
\text{relInd}: L_n(\pi_1(X),\pi_1(\partial X))\to K_n(C^*(\widetilde X, \widetilde{\partial X})^{\Gamma,G})
\]
is a well defined group homomorphism.
\end{theorem}

%%%%%%%%%%%%%%%%%%%%%%%%%%%

\newpage
%\setlength{\marginrulewidth}{0pt}

%\bibliography{ijmsample}
%\bibliographystyle{ijmart}

\end{document}